\newtheorem{Lem}[theorem]{Lemma}
\newtheorem{assumption}{Assumption}
\begin{document}

\markboth{Balibrea-Iniesta et al.}{}

\title{\Large{\textbf{CHAOTIC DYNAMICS IN NONAUTONOMOUS MAPS: \\ APPLICATION TO THE NONAUTONOMOUS H\'ENON MAP}}}

\author{FRANCISCO BALIBREA-INIESTA\footnote{Instituto de Ciencias Matem\'aticas, CSIC-UAM-UC3M-UCM, Spain, \textit{francisco.balibrea@icmat.es}.}}

\address{Instituto de Ciencias Matem\'aticas, CSIC-UAM-UC3M-UCM,\\ C/ Nicol\'as Cabrera 15, Campus de Cantoblanco UAM\\
Madrid, 28049, Spain\\
francisco.balibrea@icmat.es}

\author{CARLOS LOPESINO\footnote{Instituto de Ciencias Matem\'aticas, CSIC-UAM-UC3M-UCM, Spain, \textit{carlos.lopesino@icmat.es}.}}

\address{Instituto de Ciencias Matem\'aticas, CSIC-UAM-UC3M-UCM,\\ C/ Nicol\'as Cabrera 15, Campus de Cantoblanco UAM\\
Madrid, 28049, Spain\\
carlos.lopesino@icmat.es}

\author{STEPHEN WIGGINS\footnote{School of Mathematics, University of Bristol, United Kingdom, \textit{s.wiggins@bristol.ac.uk}.}}

\address{School of Mathematics, University of Bristol,\\ Bristol BS8 1TW, United Kingdom \\
s.wiggins@bristol.ac.uk}

\author{ANA M. MANCHO\footnote{Instituto de Ciencias Matem\'aticas, CSIC-UAM-UC3M-UCM, Spain, \textit{a.m.mancho@icmat.es}.}}

\address{Instituto de Ciencias Matem\'aticas, CSIC-UAM-UC3M-UCM,\\ C/ Nicol\'as Cabrera 15, Campus de Cantoblanco UAM\\
Madrid, 28049, Spain\\
a.m.mancho@icmat.es}

\maketitle

\begin{abstract}
\noindent
In this paper we analyze chaotic dynamics for two dimensional nonautonomous maps through the use of a nonautonomous version of the Conley-Moser conditions given previously. With this approach we are able to give a precise definition of what is meant by a {\em chaotic invariant set} for nonautonomous maps. We extend the nonautonomous Conley-Moser conditions by deriving a new sufficient condition for the nonautonomous chaotic invariant set to be hyperbolic.  We consider the specific example of a nonautonomous H\'enon map and give sufficient conditions, in terms of the parameters defining the map, for the nonautonomous H\'enon map to have a hyperbolic chaotic invariant set.
\newline
\\
\textit{Keywords:} chaotic dynamics, invariant set, hyperbolic.
\end{abstract}

\section{Introduction}
\label{sec:intro}

Studies of the H\'{e}non map  (\cite{Henon}) have played a seminal role in the development of our understanding of chaotic dynamics and strange attractors.
The map depends on two parameters, $A$ and $B$, and has the following form:

\begin{equation}
\begin{array}{ccccc}
H & : & \quad \mathbb{R}^{2} & \quad \longrightarrow & \mathbb{R}^{2}, \\ & & (x,y) & \quad \longmapsto & (A+By-x^{2},x),
\end{array}
\end{equation}

\noindent
where we will require $B \not = 0$ in order to endure  the existence of the inverse map,

\begin{equation}
\begin{array}{ccccc}
H^{-1} & : & \quad \mathbb{R}^{2} & \quad \longrightarrow & \mathbb{R}^{2}, \\ & & (x,y) & \quad \longmapsto & (y,(x-A+y^{2})/B).
\end{array}
\end{equation}

The ``heart'' of chaotic dynamics is exemplified by the so-called ``Smale horseshoe map'' (see \cite{s80} for a general description, with background). The essential feature of the Smale horseshoe map for chaos is that the map contains an invariant Cantor set on which the dynamics are topologically conjugate to a shift map defined on a finite number of symbols (a ``chaotic invariant set'', sometimes also referred to as a ``chaotic saddle''). \cite{Dev79}  gave sufficient conditions, in terms of the parameters $A$ and $B$, for the H\'enon map to have an invariant Cantor set on which it is topologically conjugate to a shift map of two symbols. The proof uses a technique due to Conley and Moser (see \cite{Moser})  that is referred to as the ``Conley-Moser conditions'' (but for earlier work in a similar spirit see \cite{vma_a,vma_b,vma_c}). \cite{Holmes} used these conditions to show the existence of a chaotic invariant set in the so-called ``bouncing ball map''. The Conley-Moser conditions were given a more detailed exposition, along with a slight weakening of the hypotheses, in \cite{Wiggins03}. More recently, the  Conley-Moser conditions were used to show the existence of a chaotic invariant set in the Lozi map (\cite{Carlos}).

The purpose of this paper is to carry out a similar analysis for a nonautonomous version of the H\'enon map. The generalization of the Conley-Moser conditions for  nonautonomous systems, i.e. in the discrete time setting with dynamics  defined by an infinite sequence of maps, was given in \cite{Wiggins99}. We extend the nonautonomous Conley-Moser conditions further by providing an additional condition which is sufficient for the nonautonomous chaotic invariant set to be hyperbolic. Hyperbolicity of nonautonomous invariant sets is discussed in general in \cite{KH}. Earlier work on chaos in nonautonomous systems  is described in \cite{ls,stoffa,stoffb}. Recent interesting work is described in \cite{lw1,lw2}.

While the development of the ``dynamical systems approach to nonautonomous dynamics'' is currently a topic of much interest, it is not a topic that is widely known in the applied dynamical systems community (especially the fundamental work that was done in the 1960's). An applied motivation for such work is an understanding of fluid transport from the dynamical systems point of view for aperiodically time dependent flows. \cite{wm} have given a survey of the history of nonautonomous dynamics as well as its application to fluid transport. 

This paper is outlined as follows. In Section \ref{sec:pc} we develop the required concepts for ``building'' chaotic invariant sets for two-dimensional nonautonomous maps. In Section \ref{sec:mainthm} we  prove the ``main theorem'' generalizing the Conley-Moser conditions that provide necessary conditions for two-dimensional nonautonomous maps to have a chaotic invariant set.  In the course of the proof of the theorem the nature of chaotic invariant sets, and chaos, for nonautonomous maps is developed. This theorem was first given in \cite{Wiggins99}, but in Section \ref{sec:nacm3} we develop the theory further by providing a more  analytical, rather than topological, construction for one of the Conley-Moser conditions that allows us to conclude that the nonautonomous chaotic invariant set is hyperbolic. In Section \ref{sec:nahmap} we develop a version of the nonautonomous H\'enon map and use the previously developed results to give sufficient conditions for the map to possess a nonautonomous chaotic invariant set. In Section \ref{sec:summ} we discuss directions for future work along these lines.

\section{Preliminary concepts}
\label{sec:pc}

In this section we describe the basic setting and concepts that we will use throughout the remainder of the paper. 

Nonautonomous dynamics will be defined by a sequence of maps and domains,
$\lbrace f_{n},D_{n} \rbrace_{n=-\infty}^{+ \infty}$, acting as follows:

\begin{equation} f_{n}: D_{n} \longrightarrow D_{n+1} \quad \quad \forall n \in \mathbb{Z} \quad \text{and} \quad f^{-1}_{n}: D_{n+1} \longrightarrow D_{n}, \end{equation}

\noindent
where, for our purposes, $D_n$ will be an appropriately chosen domain in $\mathbb{R}^2$, for all $n$.

Similar to the Smale horseshoe construction (\cite{Wiggins03}), on each domain $D_{n}$ we must construct  a finite collection of vertical strips $V_{i}^{n} \subset D_{n}$ ($\forall n \in \mathbb{Z}$ and $\forall i \in I = \lbrace 1,2,...,N \rbrace $) which map to a finite collection of horizontal strips $H_{i}^{n+1}$ located in $D_{n+1}$:

\begin{equation} H_{i}^{n+1} \subset D_{n+1} \quad \text{with} \quad f_{n}(V_{i}^{n})=H_{i}^{n+1} \text{ ,} \quad \forall n \in \mathbb{Z} \text{ ,} \quad i\in I. \end{equation}

\noindent
Associated with these mappings we will need to  define a {\em transition matrix} as follows:

$$ A \equiv \lbrace A^{n} \rbrace_{n=-\infty}^{+\infty} \text{ is a sequence of matrices of dimension } N \times N \text{ such that}$$
$$ A_{ij}^{n}= \begin{cases} 1 \quad \quad \text{if } f_{n}(V_{i}^{n}) \cap V_{j}^{n+1} \not = \emptyset \\ 0 \quad \quad \text{otherwise} \end{cases} \text{or equivalently} $$
\begin{equation} A_{ij}^{n}= \begin{cases} 1 \quad \quad \text{if } H_{i}^{n+1} \cap V_{j}^{n+1} \not = \emptyset \\ 0 \quad \quad \text{otherwise} \end{cases} \forall i,j \in I. \hspace{1.65cm} \end{equation}

However, first we must precisely define the notion of the domains that we will use, horizontal and vertical strips in those domains, and provide a characterization of the intersection of horizontal and vertical strips in the domain appropriate for our purposes. 

To begin, let $D \subset \mathbb{R}^{2}$ denote a closed and bounded set. We consider two associated subsets of $\mathbb{R}$:

$$ D_{x} = \lbrace x \in \mathbb{R} \text{ } | \text{ } \text{there exists a } y \in \mathbb{R} \text{ with } (x,y) \in D \rbrace $$
\begin{equation} D_{y} = \lbrace y \in \mathbb{R} \text{ } | \text{ } \text{there exists an } x \in \mathbb{R} \text{ with } (x,y) \in D \rbrace \end{equation}

\noindent
Therefore $D_{x}$ and $D_{y}$ represent the projections of $D$ onto the $x$-axis and the $y$-axis respectively. From this it is easy to see that $D \subset D_{x} \times D_{y}$. We consider two closed intervals $I_{x} \subset D_{x}$ and $I_{y} \subset D_{y}$. We next define $\mu_h$-horizontal and $\mu_v$-vertical curves on these domains.

\begin{definition} Let $0 \leq \mu_{h} < + \infty$. A $\mu_{h}$-horizontal curve $\overline{H}$ is defined to be the graph of a function $h : I_{x} \rightarrow \mathbb{R}$ where $h$ satisfies the following two conditions:
\newline
\\
1. The set $\overline{H}= \lbrace (x,h(x)) \in \mathbb{R}^{2} \text{ } | \text{ } x \in I_{x} \rbrace$ is contained in $D$.
\\
2. For every $x_{1},x_{2} \in I_{x}$ we have the Lipschitz condition
\begin{equation} |h(x_{1})-h(x_{2})| \leq \mu_{h} |x_{1}-x_{2}| \end{equation}
\\
Similarly, let $0 \leq \mu_{v} < + \infty$. A $\mu_{v}$-vertical curve $\overline{V}$ is defined to be the graph of a function $v : I_{y} \rightarrow \mathbb{R}$ where $v$ satisfies the following two conditions:
\newline
\\
1. The set $\overline{V}= \lbrace (v(y),y) \in \mathbb{R}^{2} \text{ } | \text{ } y \in I_{y} \rbrace$ is contained in $D$.
\\
2. For every $y_{1},y_{2} \in I_{y}$ we have the Lipschitz condition
\begin{equation} |v(y_{1})-v(y_{2})| \leq \mu_{v} |y_{1}-y_{2}| \end{equation} \end{definition}

\noindent
Next we ``fatten'' these curves into strips.

\begin{definition} Given two nonintersecting $\mu_{v}$-vertical curves $v_{1}(y)<v_{2}(y)$, $y \in I_{y}$, we define a $\mu_{v}$-vertical strip as
\begin{equation} V = \lbrace (x,y) \in \mathbb{R}^{2} \text{ } | \text{ } x \in [v_{1}(y),v_{2}(y)], \text{ } y \in I_{y} \rbrace \end{equation}
\\
Similarly, given two nonintersecting $\mu_{h}$-horizontal curves $h_{1}(x)<h_{2}(x)$, $x \in I_{x}$, we define a $\mu_{h}$-horizontal strip as
\begin{equation} H = \lbrace (x,y) \in \mathbb{R}^{2} \text{ } | \text{ } y \in [h_{1}(x),h_{2}(x)], \text{ } x \in I_{x} \rbrace \end{equation}
\newline
The width of horizontal and vertical strips is defined as
\begin{equation} d(H)= \max_{x \in I_{x}} |h_{2}(x)-h_{1}(x)| \quad , \quad d(V)=\max_{y \in I_{y}} |v_{2}(y)-v_{1}(y)| \end{equation} \end{definition}

\noindent
We will need to consider different parts of the  boundary of the strips in relation to the domain on which they are defined. The following three definitions provide the necessary concepts.

\begin{definition} The vertical boundary of a $\mu_{h}$-horizontal strip $H$ is denoted
\begin{equation} \partial_{v}H \equiv \lbrace (x,y) \in H \text{ } | \text{ } x \in \partial I_{x} \rbrace \end{equation}
The horizontal boundary of a $\mu_{h}$-horizontal strip $H$ is denoted
\begin{equation} \partial_{h}H \equiv \partial H \setminus \partial_{v}H \end{equation} \end{definition}

\begin{definition} We say that $H$ is a $\mu_{h}$-horizontal strip contained in a $\mu_{v}$-vertical strip $V$ if the two $\mu_{h}$ horizontal curves defining the horizontal boundaries of $H$ (denoted by $\partial_{h}H$) are contained in $V$, with the remaining boundary components of $H$ (denoted by $\partial_{v}H$) contained in $\partial_{v}V$. These two last subsets, $\partial_{h}H$ and $\partial_{v}H$ are referred to as the horizontal and vertical boundaries of $H$, respectively. See Figure 1.
\end{definition}

\begin{figure}[h!]
\label{fig1}
\centering
\includegraphics[scale=0.38]{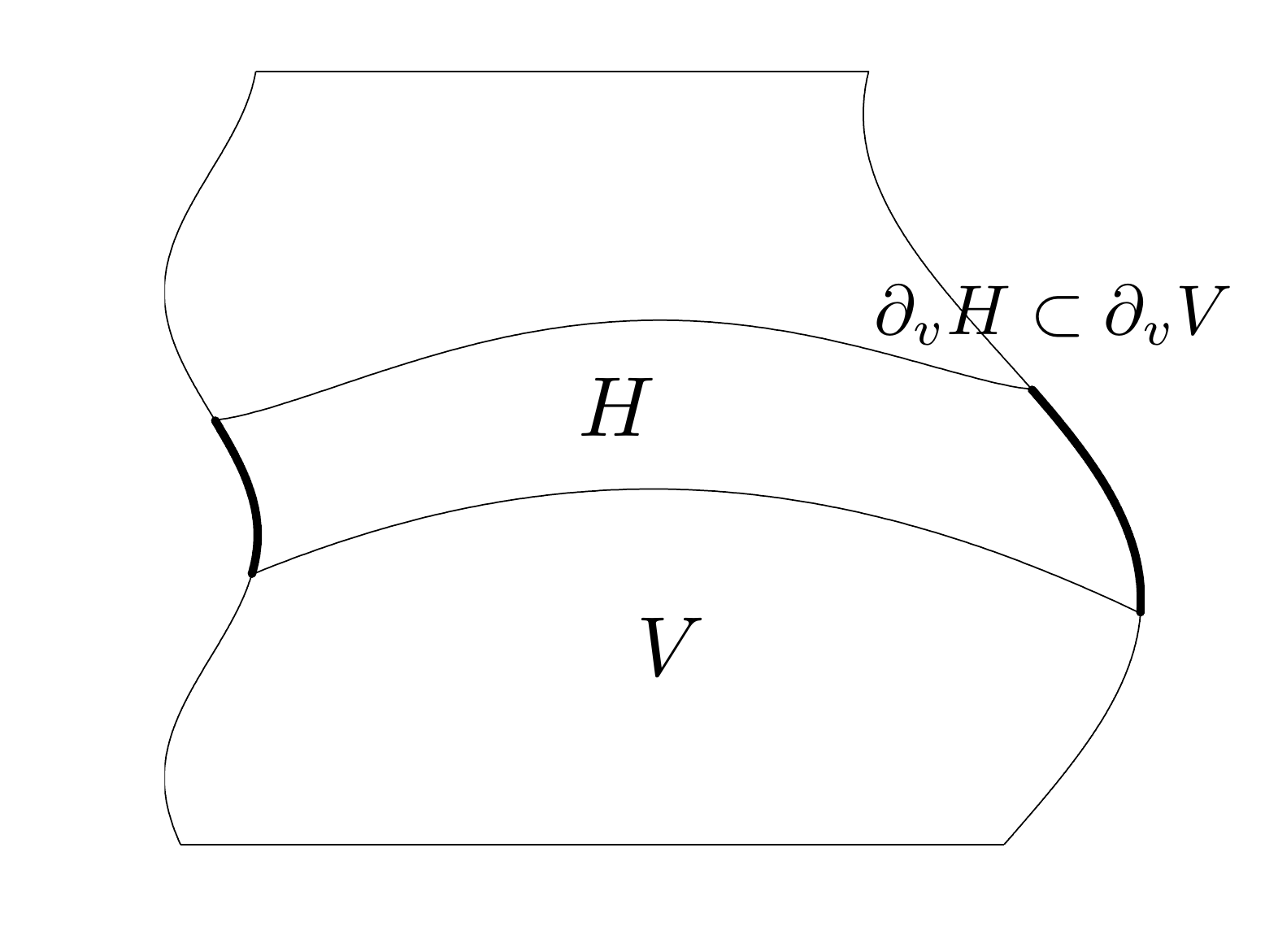}
\caption{$H$ is bounded by two $\mu_{h}$-horizontal curves, each of them linking the two $\mu_{v}$-vertical curves composing $\partial_{v}V$.}
\end{figure}

\begin{definition} Let $V$ and $\widetilde{V}$ be $\mu_{v}$-vertical strips. $\widetilde{V}$ is said to intersect $V$ fully if $\widetilde{V} \subset V$ and $\partial_{h}\widetilde{V} \subset \partial_{h}V$. See Figure \ref{fig2}.
\end{definition}

\begin{figure}[h!]
\label{fig2}
\centering
\includegraphics[scale=0.5]{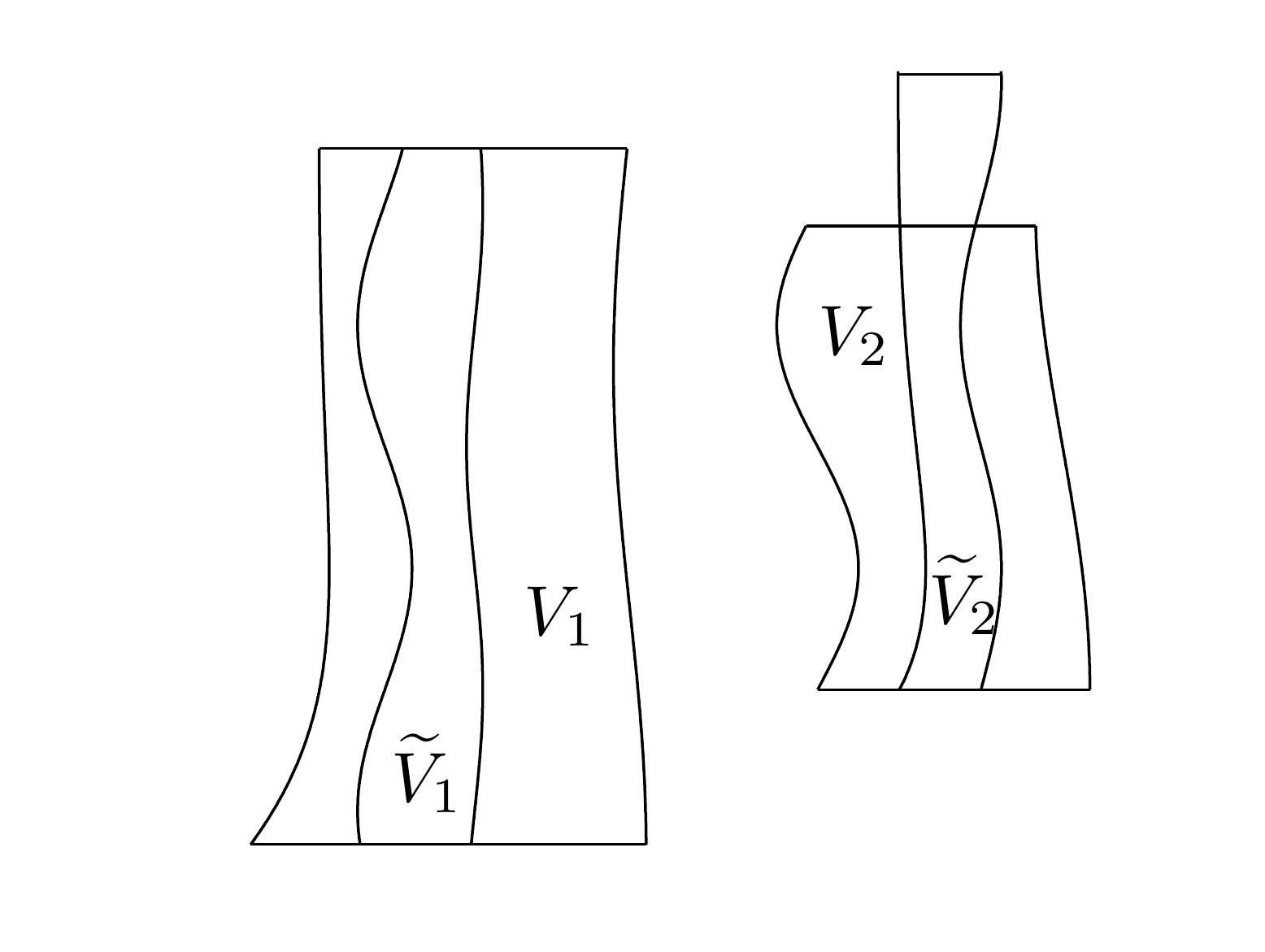}
\caption{$\widetilde{V}_{1}$ intersects $V_{1}$ fully. This does not happen for $\widetilde{V}_{2}$ and $V_{2}$.}
\end{figure}

\section{The main theorem}
\label{sec:mainthm}

In this section we prove the main general theorem which provides sufficient conditions for the existence of a chaotic invariant set for  nonautonomous maps. In the course of the proof our meaning of ``chaos'' for nonautonomous dynamics will be made precise. 

Following  the original development of the Conley-Moser conditions (\cite{Moser}), there are three  geometrical and analytical conditions that, if satisfied, provide  sufficient conditions for an autonomous map (in the original formulation) to have a chaotic invariant set. These are referred to as A1, A2, and A3. The conditions A1 and A2 provide sufficient conditions for the existence of a topological chaotic invariant set. The conditions A1 and A3 provide sufficient conditions for a hyperbolic chaotic invariant set.
Conditions A1 and A2 were developed for nonautonomous dynamics in \cite{Wiggins99}.  In this section we recall A1 and A2, but we also give a new construction of A3 for nonautonomous dynamics\footnote{We point out a minor technical point. In previous development of the Conley-Moser conditions (e.g. \cite{Moser, Wiggins03}) the set-up considers the mapping of horizontal strips to vertical strips. However, for the H\'enon map it is more natural to consider vertical strips mapping to horizontal strips. Of course, the choice of what we refer to as ``horizontal'' and ``vertical'' is arbitrary.  However, the same choice of coordinate labeling  as is used in the previous literature can be used for  the H\'enon map if we  impose a rotation $P = \left( \begin{array}{cc} 0 & 1 \\ 1 & 0 \end{array} \right) $ on the sequence of maps $\lbrace f_{n} \rbrace_{n=-\infty}^{+\infty}$ or, alternatively, take each map $f_{n}$ as $f_{-n}^{-1}$ for every $n \in \mathbb{Z}$.}. In particular, we will show that A1 and A3 imply that A1 and  A2 also hold.

The following two lemmas will play an important role in the proof of the main theorem.

\begin{Lem} i) If $V_{1} \supset V_{2} \supset \cdots \supset V_{k} \supset \cdots$ is a nested sequence of $\mu_{v}$-vertical strips with $d(V_{k}) \rightarrow 0$ as $k \rightarrow \infty$, then $\cap_{k=1}^{\infty} V_{k} \equiv V_{\infty}$ is a $\mu_{v}$-vertical curve.
\newline
\\
ii) If $H_{1} \supset H_{2} \supset \cdots \supset H_{k} \supset \cdots$ is a nested sequence of $\mu_{h}$-horizontal strips with $d(H_{k}) \rightarrow 0$ as $k \rightarrow \infty$, then $\cap_{k=1}^{\infty} H_{k} \equiv H_{\infty}$ is a $\mu_{h}$-horizontal curve. \label{lem1}
\end{Lem}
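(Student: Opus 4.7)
The plan is to prove part (i) by explicitly identifying the limit curve as the uniform limit of the bounding curves of the strips; part (ii) then follows by exchanging the roles of $x$ and $y$.

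First I would write each $V_k$ via its defining pair of nonintersecting $\mu_v$-vertical curves: $V_k=\{(x,y): v_1^{(k)}(y)\le x\le v_2^{(k)}(y),\ y\in I_y\}$, with $v_1^{(k)}<v_2^{(k)}$ on $I_y$. The nesting $V_{k+1}\subset V_k$ forces, for every $y\in I_y$,
\begin{equation}
v_1^{(k)}(y)\le v_1^{(k+1)}(y)\le v_2^{(k+1)}(y)\le v_2^{(k)}(y),
\end{equation}
so the sequences $\{v_1^{(k)}(y)\}$ and $\{v_2^{(k)}(y)\}$ are monotone and bounded, hence pointwise convergent to limits $v_1^{\infty}(y)$ and $v_2^{\infty}(y)$ satisfying $v_1^{\infty}\le v_2^{\infty}$. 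The hypothesis $d(V_k)=\max_{y\in I_y}|v_2^{(k)}(y)-v_1^{(k)}(y)|\to 0$ then pins both limits to a single function $v_\infty:I_y\to\mathbb{R}$, and the same estimate upgrades the pointwise convergence to uniform convergence (since $|v_j^{(k)}(y)-v_\infty(y)|\le d(V_k)$ for $j=1,2$).

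Next I would verify that $v_\infty$ is itself a $\mu_v$-vertical curve. The Lipschitz bound $|v_j^{(k)}(y_1)-v_j^{(k)}(y_2)|\le\mu_v|y_1-y_2|$ passes to the uniform limit, giving $|v_\infty(y_1)-v_\infty(y_2)|\le\mu_v|y_1-y_2|$. Closedness of $D$ (it is the closed bounded set from Section~\ref{sec:pc}) combined with $(v_1^{(k)}(y),y)\in D$ and $v_1^{(k)}(y)\to v_\infty(y)$ gives $(v_\infty(y),y)\in D$ for every $y\in I_y$, so the graph of $v_\infty$ lies in $D$.

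Finally I would identify $V_\infty:=\bigcap_{k=1}^\infty V_k$ with the graph of $v_\infty$. The inclusion $\mathrm{graph}(v_\infty)\subset V_\infty$ is immediate from the squeeze $v_1^{(k)}(y)\le v_\infty(y)\le v_2^{(k)}(y)$. Conversely, if $(x,y)\in V_\infty$ then $v_1^{(k)}(y)\le x\le v_2^{(k)}(y)$ for every $k$, and letting $k\to\infty$ forces $x=v_\infty(y)$. The main (and essentially only) obstacle is the passage from the pointwise to uniform convergence of the bounding curves; the width condition $d(V_k)\to 0$ handles this cleanly, which is why it was built into the hypothesis. Part (ii) is obtained by the same argument with the roles of the coordinates, and of horizontal and vertical curves and strips, interchanged.
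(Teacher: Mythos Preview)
Your argument is correct and is essentially the standard one. Note, however, that the paper does not actually prove this lemma: it simply refers the reader to \cite{Wiggins03} for the proof of both Lemma~\ref{lem1} and Lemma~\ref{lem2}. What you have written is precisely the argument one finds there---monotonicity of the bounding curves from the nesting, a common uniform limit forced by $d(V_k)\to 0$, passage of the Lipschitz constant to the limit, and the two-sided squeeze identifying $\bigcap_k V_k$ with the graph of $v_\infty$. So there is nothing to compare: your proposal supplies the proof the paper chose to outsource, and does so along the same lines as the cited reference.
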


\begin{Lem} Suppose $0 \leq \mu_{v} \mu_{h} <1$. Then a $\mu_{v}$-vertical curve and a $\mu_{h}$-horizontal curve intersect in a unique point. 
\label{lem2}
\end{Lem}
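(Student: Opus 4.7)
The plan is to recast the intersection problem as a fixed-point equation and exploit the smallness of $\mu_v \mu_h$ via Banach's contraction principle. A point $(x^{*},y^{*})$ lies in $\overline{V}\cap\overline{H}$ precisely when $x^{*}=v(y^{*})$ with $y^{*}\in I_{y}$ and $y^{*}=h(x^{*})$ with $x^{*}\in I_{x}$. Substituting one relation into the other, $y^{*}$ must satisfy $y^{*}=h(v(y^{*}))$ and, symmetrically, $x^{*}=v(h(x^{*}))$. So I would reduce the lemma to showing that the map $T:=h\circ v$ has a unique fixed point.

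For the contraction estimate, using the two Lipschitz conditions in Definition~1 I would compute, for $y_1,y_2$ in the domain of $T$,
\begin{equation*}
|T(y_1)-T(y_2)| = |h(v(y_1))-h(v(y_2))| \le \mu_h\,|v(y_1)-v(y_2)| \le \mu_h\mu_v\,|y_1-y_2|,
\end{equation*}
and since $\mu_h\mu_v<1$ by hypothesis, $T$ is a strict contraction. Uniqueness of the intersection then follows immediately: if $(x_1,y_1)$ and $(x_2,y_2)$ are two intersection points, both $y_i$ satisfy $y_i=T(y_i)$, and the contraction estimate forces $|y_1-y_2|\le\mu_h\mu_v\,|y_1-y_2|$, hence $y_1=y_2$ and consequently $x_1=v(y_1)=v(y_2)=x_2$.

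For existence I would apply the Banach fixed-point theorem. The technical step, which I expect to be the main obstacle, is making $T$ a self-map of a complete metric space so that Banach applies directly. The cleanest way is to extend $v$ and $h$ to all of $\mathbb{R}$ by a Lipschitz extension (e.g.\ $\tilde{v}(y)=v(\min(\max(y,\inf I_y),\sup I_y))$) preserving the constants $\mu_v,\mu_h$; then $\tilde{T}:=\tilde{h}\circ\tilde{v}:\mathbb{R}\to\mathbb{R}$ is a contraction on the complete space $\mathbb{R}$, so it has a unique fixed point $y^{*}$. One then verifies $y^{*}\in I_y$ and $v(y^{*})\in I_x$ using the fact that both curves lie in the compact set $D$, so the fixed point coincides with an intersection point of the original curves. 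An alternative, equivalent approach is to run the contraction argument symmetrically on $S:=v\circ h$ restricted to $I_x$, whose unique fixed point $x^{*}$ then pairs with $y^{*}=h(x^{*})$ to give the required intersection.
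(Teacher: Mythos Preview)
Your contraction-mapping argument is correct and is exactly the standard proof; the paper does not supply its own argument but cites \cite{Wiggins03}, where the lemma is proved precisely by showing that $T=h\circ v$ (or equivalently $v\circ h$) is a contraction with constant $\mu_h\mu_v<1$ and invoking Banach's theorem. In the setting used there and in this paper the curves span the full square, so $v(I_y)\subset I_x$ and $h(I_x)\subset I_y$ automatically, making $T:I_y\to I_y$ a self-map of a closed interval and rendering your extension step unnecessary---though it does no harm.
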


\noindent
The proof of both these two lemmas can be found in \cite{Wiggins03}.

We assume that for each  $D_{n} \subset \mathbb{R}^{2}$ we have:

\begin{equation} 
f_{n}(D_{n}) \cap D_{n+1} \not = \emptyset \text{ ,} \quad \forall n \in \mathbb{Z} \end{equation}

Furthermore, we assume that on each $D_n$ we can find a set of disjoint $\mu_v$ vertical strips, $D_{V}^{n} \equiv \cup_{i=1}^{N}V_{i}^{n}$, such that 
 each $f_{n}$ is one-to-one on $D_{V}^{n} \equiv \cup_{i=1}^{N}V_{i}^{n}$. We then define

\begin{equation} H_{ij}^{n+1} \equiv f_{n}(V_{i}^{n}) \cap V_{j}^{n+1} = H_{i}^{n+1} \cap V_{j}^{n+1}, \quad \text{and} \quad V_{ji}^{n} \equiv f_{n}^{-1}(V_{j}^{n+1}) \cap V_{i}^{n} \end{equation}

\noindent
with inverse function $f_{n}^{-1}$ defined on $D_{H}^{n+1} \equiv \cup_{i=1}^{N}H_{i}^{n+1} = f_{n} \left( \cup_{i=1}^{N}V_{i}^{n} \right)$ for every $n \in \mathbb{Z}$, (see Figure \ref{fig3}).

\noindent
The transition matrix $\lbrace A^{n} \rbrace_{n=-\infty}^{+\infty}$ is defined as follows:

\begin{equation} A_{ij}^{n}= \begin{cases} 1 \quad \quad \text{if } H_{ij}^{n+1} = H_{i}^{n+1} \cap V_{j}^{n+1} \not = \emptyset \\ 0 \quad \quad \text{otherwise} \end{cases} \forall i,j \in I. \hspace{1.65cm} \end{equation}

Now we can state the first two Conley-Moser conditions for a sequence of maps that are sufficient to prove the existence of a chaotic invariant set for nonautonomous systems.

\begin{assumption}[\textbf{A1}] For all $i,j \in I$ such that $A_{ij}^{n}=1$, $H_{ij}^{n+1}$ is a $\mu_{h}$-horizontal strip contained in $V_{j}^{n+1}$ with $0 \leq \mu_{v} \mu_{h} <1$. Moreover, $f_{n}$ maps $V_{ji}^{n}$ homeomorphically onto $H_{ij}^{n+1}$ with $f_{n}^{-1}(\partial_{h}H_{ij}^{n+1}) \subset \partial_{h}V_{i}^{n}$. 
\end{assumption}

\begin{figure}[h!]
\centering
\includegraphics[scale=0.55]{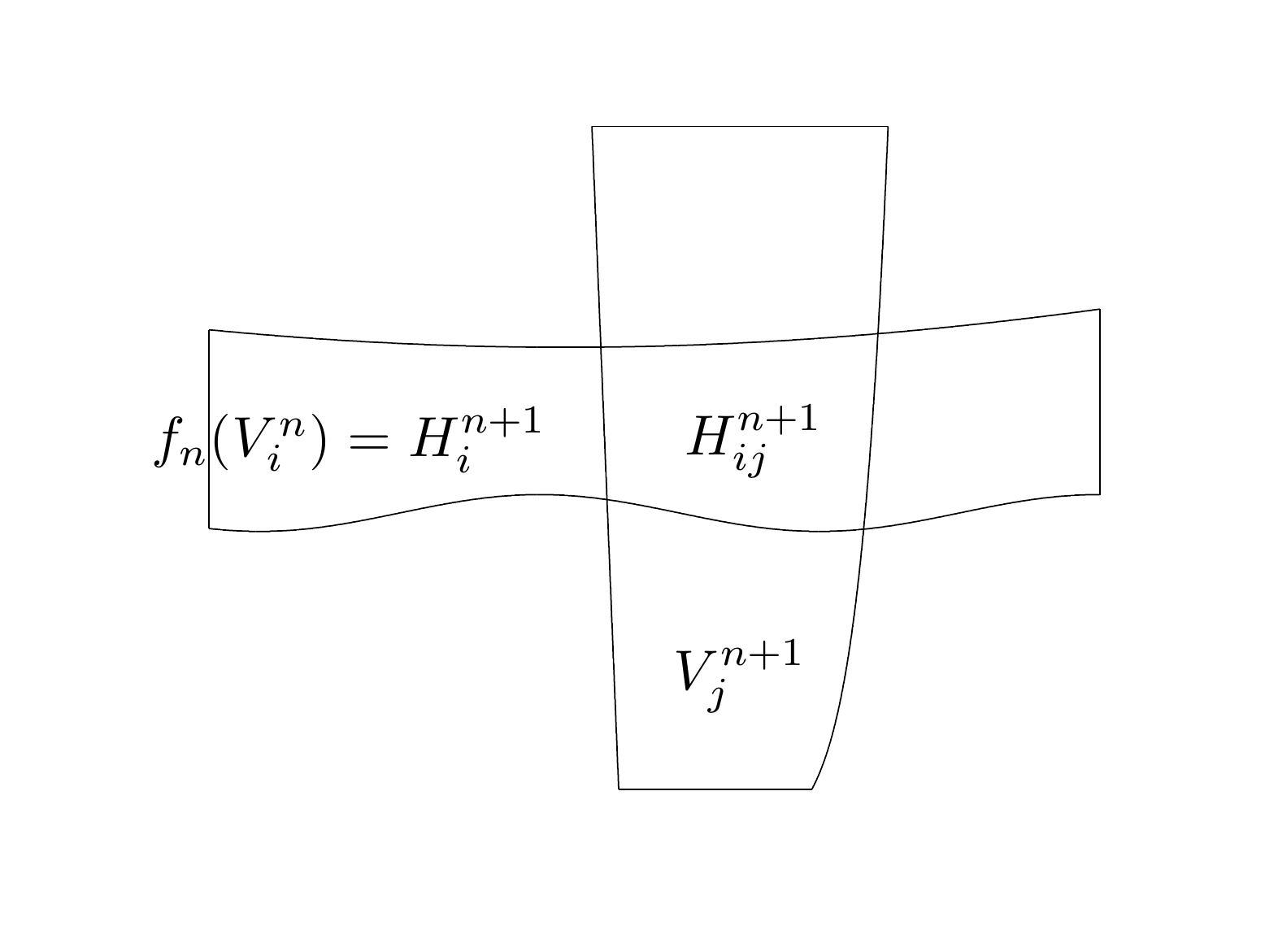}
\caption{Assuming that A1 is satisfied for a given sequence of maps, this figure illustrates that every non empty $H_{ij}^{n+1} \subset D_{n+1}$ is a $\mu_{h}$-horizontal strip contained in $V_{j}^{n+1}$. This also shows that the two $\mu_{h}$-horizontal curves which form the boundary $\partial_{h}f_{n}(V_{i}^{n})=\partial_{h}(H_{i}^{n+1})$ cut the vertical boundary of $V_{j}^{n+1}$ in exactly four points.}
\label{fig3}
\end{figure}

\noindent
Also, since $f_{n}$ maps $V_{ji}^{n}$ homeomorphically onto $H_{ij}^{n+1}$ with $f_{n}^{-1}(\partial_{h}H_{ij}^{n+1}) \subset \partial_{h}V_{i}^{n}$ then $f_{n}^{-1}$ maps $H_{ij}^{n+1}$ homeomorphically onto $V_{ji}^{n}$ ($\forall i,j \in I$) with

\begin{equation} 
f_{n} \left( f_{n}^{-1}(\partial_{h}H_{ij}^{n+1}) \right) =  \partial_{h}H_{ij}^{n+1} \subset f_{n}(\partial_{h} V_{i}^{n}). 
\end{equation}

\begin{assumption}[\textbf{A2}] Let $V^{n+1}$ be a $\mu_{v}$-vertical strip which intersects $V_{j}^{n+1}$ fully. Then $f_{n}^{-1}(V^{n+1}) \cap V_{i}^{n} \equiv \widetilde{V}_{i}^{n}$ is a $\mu_{v}$-vertical strip intersecting $V_{i}^{n}$ fully for all $i \in I$ such that $A_{ij}^{n}=1$. Moreover,
\begin{equation} d(\widetilde{V}_{i}^{n}) \leq \nu_{v} \text{ } d(V^{n+1}) \quad \quad \text{for some } 0< \nu_{v} < 1 \end{equation}
\\
Similarly, let $H^{n}$ be a $\mu_{h}$-horizontal strip contained in $V_{j}^{n}$ such that also $H^{n} \subset H_{ij}^{n}$ for some $i,j \in I$ with $A_{ij}^{n-1}=1$. Then $f_{n}(H^{n}) \cap V_{k}^{n+1} \equiv \widetilde{H}_{k}^{n+1}$ is a $\mu_{h}$-horizontal strip contained in $V_{k}^{n+1}$ for all $k \in I$ such that $A_{jk}^{n}=1$. Moreover,
\begin{equation} d(\widetilde{H}_{k}^{n+1}) \leq \nu_{h} \text{ } d(H^{n}) \quad \quad \text{for some } 0< \nu_{h} < 1 \end{equation}
\end{assumption}

Now we develop  symbolic dynamics in a form appropriate for nonautonomous dynamics. Let

\begin{equation} 
s=( \cdots s_{n-k} \cdots s_{n-2} s_{n-1}. s_{n} s_{n+1} \cdots s_{n+k} \cdots ) \end{equation}

\noindent
denote a bi-infinite sequence with $s_{l} \in I$ ($\forall l\in \mathbb{Z}$) where adjacent elements of the sequence satisfy the rule $A_{s_{n}s_{n+1}}^{n}=1$, $\forall n \in \mathbb{Z}$.

\noindent
Similarly to the symbolic dynamics implemented for the Smale horseshoe (see page 575 of \cite{Wiggins03}), here we denote the set of all such symbol sequences by $\Sigma_{ \lbrace A^{n} \rbrace }^{N}$. If $\sigma$ denotes the shift map

\begin{equation} \sigma (s)= \sigma ( \cdots s_{n-2} s_{n-1}. s_{n} s_{n+1} \cdots ) =  ( \cdots s_{n-2} s_{n-1} s_{n}. s_{n+1} \cdots )\end{equation}

\noindent
on $\Sigma_{ \lbrace A^{n} \rbrace }^{N}$, we define the ``extended shift map'' $\tilde{\sigma}$ on $\widetilde{\Sigma} \equiv \Sigma_{ \lbrace A^{n} \rbrace }^{N} \times \mathbb{Z}$ by

\begin{equation} \
\tilde{\sigma}(s,n)=(\sigma (s),n+1). \quad \text{It is also defined } f(x,y;n)=(f_{n}(x,y),n+1). \end{equation}

Now we can state the main theorem.

\begin{theorem}[Main theorem] Suppose $\lbrace f_{n},D_{n} \rbrace_{n=-\infty}^{+\infty}$ satisfies A1 and A2. There exists a sequence of sets $\Lambda_{n} \subset D_{n}$, with $f_{n}(\Lambda_{n})=\Lambda_{n+1}$, such that the following diagram commutes
\begin{equation} \begin{array}{ccccc} & & f & & \\ & \Lambda_{n} \times \mathbb{Z} & \longrightarrow & \Lambda_{n+1} \times \mathbb{Z} & \\ \\ \phi & \downarrow & & \downarrow & \phi \\ & & \tilde{\sigma} & & \\ & \Sigma_{ \lbrace A^{n} \rbrace }^{N} \times \mathbb{Z} & \longrightarrow & \Sigma_{ \lbrace A^{n} \rbrace }^{N} \times \mathbb{Z} & \end{array} 
\end{equation}

\noindent
where $\phi (x,y;n) \equiv (\phi_{n}(x,y),n)$ with $\phi_{n}(x,y)$ a homeomorphism mapping $\Lambda_{n}$ onto $\Sigma_{ \lbrace A^{n} \rbrace }^{N}$.
\label{mainthm}
\end{theorem}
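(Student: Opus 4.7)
The plan is to realize each $\Lambda_n$ as an itinerary-encoded set, paralleling the classical Smale--Conley--Moser construction but with all data (maps, strips, and admissibility rule) indexed by $n$. For each admissible bi-infinite sequence $s=(\cdots s_{n-1}.s_n s_{n+1}\cdots)\in\Sigma^N_{\{A^n\}}$, I will produce a single point $\phi_n^{-1}(s)\in D_n$ as the intersection of a vertical curve encoding the forward tail with a horizontal curve encoding the backward tail.

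For the forward tail, set $V^n(s;0):=V^n_{s_n}$ and, recursively, $V^n(s;k):=f_n^{-1}\!\bigl(V^{n+1}(\sigma s;k-1)\bigr)\cap V^n_{s_n}$. Admissibility $A^{n+k-1}_{s_{n+k-1}s_{n+k}}=1$ together with A1 and the first half of A2 shows inductively that each $V^n(s;k)$ is a $\mu_v$-vertical strip intersecting $V^n_{s_n}$ fully, with $d(V^n(s;k))\leq \nu_v^{\,k}\max_{j\in I}d(V^{n+k}_j)$; Lemma~\ref{lem1}(i) then identifies $\bigcap_{k\geq 0}V^n(s;k)$ with a single $\mu_v$-vertical curve $\overline{V}(s;n)$. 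A symmetric construction, applied to the forward images $f_{n-1}\circ\cdots\circ f_{n-k}(V^{n-k}_{s_{n-k}})$ and using the second half of A2, produces a nested family of $\mu_h$-horizontal strips in $V^n_{s_n}$ whose widths shrink like $\nu_h^{\,k}$; their intersection is a single $\mu_h$-horizontal curve $\overline{H}(s;n)$ by Lemma~\ref{lem1}(ii). Since $\mu_v\mu_h<1$ by A1, Lemma~\ref{lem2} forces $\overline{V}(s;n)\cap\overline{H}(s;n)$ to be a single point, which I set to be $\phi_n^{-1}(s)$, and I define $\Lambda_n:=\phi_n^{-1}\bigl(\Sigma^N_{\{A^n\}}\bigr)$.

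The conjugacy and invariance follow from the construction itself. The recursion for $V^n(s;k)$ implies in the limit that $f_n(\overline{V}(s;n))\supset\overline{V}(\sigma s;n+1)$, and a parallel identity holds for $\overline{H}$; intersecting yields $f_n(\phi_n^{-1}(s))=\phi_{n+1}^{-1}(\sigma s)$, which simultaneously gives $f_n(\Lambda_n)=\Lambda_{n+1}$ and the commuting diagram. Injectivity of $\phi_n$ is the uniqueness in Lemmas~\ref{lem1}--\ref{lem2}; surjectivity onto $\Sigma^N_{\{A^n\}}$ is the observation that any $p\in\Lambda_n$ lies in a unique $V^{n+k}_j$ for each $k\in\mathbb{Z}$, and A1 forces the resulting symbol string to be admissible. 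For bicontinuity I would use the standard cylinder metric on $\Sigma^N_{\{A^n\}}$: two sequences agreeing on a central block of length $2k+1$ force their $\phi_n^{-1}$-images to lie in a common strip intersection of Euclidean diameter $O(\nu_v^{\,k}+\nu_h^{\,k})$, while points close in $D_n$ share long finite itineraries by continuity of $f_n$ and $f_n^{-1}$.

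The main obstacle is not any single deep idea but the careful propagation of the ``fully intersects'' property across time: one must repeatedly verify that each pullback $V^n(s;k)$ has its horizontal boundary inside $\partial_h V^n_{s_n}$, which requires A1's precise boundary condition $f_n^{-1}(\partial_h H^{n+1}_{ij})\subset\partial_h V^n_i$ to be chained consistently across consecutive times. The $n$-independence of the constants $\mu_v,\mu_h,\nu_v,\nu_h$ implicit in A1--A2 is what prevents the geometric contraction from degenerating and is therefore essential for the limiting curves to exist as Lipschitz graphs over the full intervals $I_x,I_y$; without this uniformity the itinerary map would fail to be a homeomorphism even though it would still be a bijection of sets.
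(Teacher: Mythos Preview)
Your outline is correct and is exactly the classical Conley--Moser/Smale construction, time-indexed: nested pullback vertical strips encoding the forward tail, nested pushforward horizontal strips encoding the backward tail, Lemmas~\ref{lem1} and~\ref{lem2} giving the unique itinerary point, and the cylinder-metric argument for bicontinuity. The paper itself does \emph{not} supply a proof of this theorem; in the remark immediately following the statement it simply refers to \cite{Wiggins99} (``The original proof can be found in \cite{Wiggins99}, keeping in mind the geometrical considerations mentioned before''), and that reference carries out precisely the construction you sketch. So there is nothing to compare against here beyond noting that you have reproduced the intended argument.

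One small point of phrasing: in your surjectivity step you write ``any $p\in\Lambda_n$ lies in a unique $V^{n+k}_j$ for each $k\in\mathbb{Z}$''; what you mean is that the forward/backward iterate $f_{n+k-1}\circ\cdots\circ f_n(p)$ (resp.\ $f_{n+k}^{-1}\circ\cdots\circ f_{n-1}^{-1}(p)$) lies in a unique $V^{n+k}_j$. This is clear from context but worth writing precisely, since $p$ itself never leaves $D_n$.
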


\noindent
\begin{remark} The sequence of sets $\lbrace \Lambda_{n} \rbrace_{n=-\infty}^{+\infty}$ is what we mean by a \textbf{chaotic set} for nonautonomous dynamics. Consequently our ``main theorem'' is a theoretical result which gives sufficient conditions for the existence of such a sequence of sets. The original proof can be found in \cite{Wiggins99}, keeping in mind the geometrical considerations mentioned before.
\end{remark}
Next, we will generalize the third Conley-Moser condition to the nonautonomous case. This will provide an alternative, and  more analytical (as opposed to topological) method for proving  that the  second Conley-Moser condition holds, and it will also provide the additional information that the chaotic invariant set is hyperbolic.

\subsection{Nonautonomous third Conley-Moser condition}
\label{sec:nacm3}

We begin by giving a natural definition of stable and unstable sector bundles for the nonautonomous situation:

\begin{equation} { \cal V}^{n} \equiv \bigcup_{i,j \in I}V_{ji}^{n} = \bigcup_{i,j \in I} f_{n}^{-1}(V_{j}^{n+1}) \cap V_{i}^{n}, \end{equation}

\begin{equation} { \cal H}^{n+1} \equiv \bigcup_{i,j \in I}H_{ij}^{n+1} = \bigcup_{i,j \in I} H_{i}^{n+1} \cap V_{j}^{n} , \quad f_{n}({ \cal V}^{n})={ \cal H}^{n+1} \end{equation}

\begin{equation} S^{u}_{{ \cal K}} \equiv \lbrace (\xi_{z},\eta_{z}) \in \mathbb{R}^{2} \text{ } | \text{ } | \eta_{z} | \leq \mu_{h} | \xi_{z} | \text{, } z \in { \cal K} \rbrace \quad \text{(unstable sector bundle)} \end{equation}

\begin{equation} S^{s}_{{ \cal K}} \equiv \lbrace (\xi_{z},\eta_{z}) \in \mathbb{R}^{2} \text{ } | \text{ } | \xi_{z} | \leq \mu_{v} | \eta_{z} | \text{, } z \in { \cal K} \rbrace \quad \text{(stable sector bundle)} \end{equation}

\noindent
with ${ \cal K}$ being either ${ \cal V}^{n}$ or ${ \cal H}^{n+1}$. Then we can state the following assumption: the third Conley-Moser condition for the nonautonomous setting.

\begin{assumption}[\textbf{A3}] $Df_{n}(S^{u}_{{ \cal V}^{n}})\subset S^{u}_{ { \cal H}^{n+1}}$, $Df^{-1}_{n}(S^{s}_{{ \cal H}^{n+1}})\subset S^{s}_{{ \cal V}^{n}}$.
\newline
\\
Moreover, if $(\xi_{f_{n}(z_{0}^{n})}, \eta_{f_{n}(z_{0}^{n})}) \equiv Df_{n}(z_{0}^{n}) \cdot (\xi_{z_{0}^{n}}, \eta_{z_{0}^{n}}) \in S^{u}_{{ \cal H}^{n+1}}$ then 

\begin{equation} | \xi_{f_{n}(z_{0}^{n})} | \geq \left( \frac{1}{\mu} \right) | \xi_{z_{0}^{n}} | \end{equation}

\noindent
If $(\xi_{f_{n}^{-1}(z_{0}^{n+1})}, \eta_{f_{n}^{-1}(z_{0}^{n+1})}) \equiv Df_{n}^{-1}(z_{0}^{n+1}) \cdot (\xi_{z_{0}^{n+1}}, \eta_{z_{0}^{n+1}}) \in S^{s}_{{ \cal V}^{n}}$ then 

\begin{equation} | \eta_{f_{n}^{-1}(z_{0}^{n+1})} | \geq \left( \frac{1}{\mu} \right) | \eta_{z_{0}^{n+1}} | \quad \text{for } \mu > 0 
\end{equation}
\end{assumption}

\noindent
Obviously we need to impose an additional condition in order to guarantee the existence of the Jacobian matrices $Df_{n}$ and $Df_{n}^{-1}$. From now we will consider that $f_{n},f_{n}^{-1} \in C^{1}$ for every $n \in \mathbb{Z}$ on their respective domains. Now we establish an important relationship between assumptions A2 and A3.

\begin{theorem} If nonautonomous A1 and A3 are satisfied for $0< \mu < 1-\mu_{h}\mu_{v}$ then A2 is satisfied.
\label{thm4}
\end{theorem}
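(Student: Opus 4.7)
The plan is to derive both assertions of A2 from A3's sector invariance and expansion estimates, with A1 supplying the geometric/homeomorphism scaffolding. I will focus on the vertical-strip half of A2; the horizontal-strip half is proved by the symmetric argument, using forward invariance of $S^u$ to control the bounding curves and using the backward expansion $|\eta_{f_n^{-1}(z)}|\geq(1/\mu)|\eta_z|$ on $S^s$ along a vertical segment realizing the width.

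The first observation is the elementary one that tangent vectors to $\mu_v$-vertical curves lie in $S^s$ and tangent vectors to $\mu_h$-horizontal curves lie in $S^u$. Given a $\mu_v$-vertical strip $V^{n+1}$ that intersects $V_j^{n+1}$ fully with bounding $\mu_v$-vertical curves $v_1<v_2$, A1 tells us that $f_n^{-1}$ is a homeomorphism of $H_{ij}^{n+1}$ onto $V_{ji}^n$ with $f_n^{-1}(\partial_h H_{ij}^{n+1})\subset\partial_h V_i^n$. Applying $f_n^{-1}$ to the arcs of $v_1,v_2$ lying in $H_{ij}^{n+1}$ and invoking $Df_n^{-1}(S^s_{\mathcal{H}^{n+1}})\subset S^s_{\mathcal{V}^n}$, together with $|\eta_{f_n^{-1}(z)}|\geq(1/\mu)|\eta_z|$ to prevent degenerate reparametrization, shows that these preimages are graphs of $\mu_v$-Lipschitz functions $\tilde v_1,\tilde v_2:I_y^n\to\mathbb{R}$. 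Combined with the boundary inclusion, this gives $\widetilde V_i^n$ as a $\mu_v$-vertical strip intersecting $V_i^n$ fully.

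For the width contraction, pick $\tilde y_0\in I_y^n$ with $|\tilde v_2(\tilde y_0)-\tilde v_1(\tilde y_0)|=d(\widetilde V_i^n)$ and let $\gamma$ be the horizontal segment at height $\tilde y_0$ joining the two boundary curves. Its tangent $(1,0)$ lies in $S^u_{\mathcal{V}^n}$, so by A3 the tangents $(\xi(t),\eta(t))$ to $f_n\circ\gamma$ satisfy $|\eta(t)|\leq\mu_h|\xi(t)|$ and $|\xi(t)|\geq 1/\mu$; since $\xi$ is continuous and bounded away from $0$, it has constant sign along $\gamma$. Parametrizing $\gamma$ over $[0,d(\widetilde V_i^n)]$ and writing $(x_i,y_i')$ for the endpoints of $f_n\circ\gamma$ lying on $v_1,v_2$ respectively, we obtain
$$|x_2-x_1|=\int_0^{d(\widetilde V_i^n)}|\xi(t)|\,dt\;\geq\;\frac{1}{\mu}\,d(\widetilde V_i^n),$$
while the triangle inequality together with the $\mu_v$-Lipschitz property of $v_1,v_2$ gives
$$|x_2-x_1|\;\leq\;d(V^{n+1})+\mu_v|y_2'-y_1'|,$$
and the sector bound integrated along $f_n\circ\gamma$ yields $|y_2'-y_1'|\leq\mu_h|x_2-x_1|$. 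Chaining these three inequalities produces
$$d(\widetilde V_i^n)\;\leq\;\frac{\mu}{1-\mu_v\mu_h}\,d(V^{n+1}),$$
so the hypothesis $\mu<1-\mu_h\mu_v$ delivers the required contraction constant $\nu_v:=\mu/(1-\mu_v\mu_h)\in(0,1)$. The horizontal-strip half of A2 is then handled by running the mirror computation, with a vertical segment realizing $d(\widetilde H_k^{n+1})$ playing the role of $\gamma$ and $Df_n^{-1}$ acting on tangent $(0,1)\in S^s$ providing the expansion.

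The main obstacle I expect is the geometric bookkeeping of Step~1: verifying that the preimages of the bounding curves really do extend across all of $I_y^n$ as graphs of functions of $y$, rather than merely as Lipschitz arcs that might double back or split. This requires carefully combining A1's homeomorphism statement on $H_{ij}^{n+1}$, the full-intersection hypothesis (so $v_1,v_2$ reach $\partial_h V_j^{n+1}$ at their endpoints), and the lower bound on $|\eta|$ from A3 to ensure monotonicity of the $y$-coordinate along the preimage. Once that is in place, the width estimate is a short computation using only the sector inequalities and the expansion lower bound.
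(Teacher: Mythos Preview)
Your proposal is correct and follows essentially the same route as the paper: the paper isolates your Step~1 content as a separate lemma (Lemma~\ref{lem5}) showing preimages of $\mu_v$-vertical curves are $\mu_v$-vertical via the sector condition and sign-constancy of $\dot y$, then proves the width estimate exactly as you do, by mapping the horizontal segment realizing $d(\widetilde V_i^n)$ forward, using $|\xi_{f_n(z)}|\geq(1/\mu)|\xi_z|$ to bound $|x_1-x_0|$ from below, and combining the $\mu_h$-horizontal bound on the image with the $\mu_v$-Lipschitz property of the boundary curves to get $|x_1-x_0|\leq d(V^{n+1})/(1-\mu_h\mu_v)$. The ``main obstacle'' you flag is handled in the paper precisely through A1's clause $f_n^{-1}(\partial_h H_{ij}^{n+1})\subset\partial_h V_i^n$ together with the nonvanishing of the tangent, just as you anticipate.
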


\noindent
Part of the proof of this theorem is based on the following result.
\\
\begin{Lem}
Let $\lbrace f_{n},D_{n} \rbrace_{n=-\infty}^{+\infty}$ be a sequence of maps satisfying A1 and A3. \\ For every $n \in \mathbb{Z}$ and every pair of indices $i,j \in I$ we have that
\newline
\\
i) if $\overline{V}^{n+1} \subset V_{j}^{n+1}$ is a $\mu_{v}$-vertical curve, then $f_{n}^{-1}(\overline{V}^{n+1}) \cap V_{i}^{n}$ is a $\mu_{v}$-vertical curve in case $\overline{V}^{n+1} \cap H_{i}^{n+1} \not = \emptyset$.
\newline
\\
ii) if $\overline{H}^{n} \subset V_{ji}^{n}$ is a $\mu_{h}$-horizontal curve, then $f_{n}(\overline{H}^{n}) \cap H_{i}^{n+1}$ is a $\mu_{h}$-horizontal curve in case $\overline{H}^{n} \cap V_{i}^{n} \not = \emptyset$.
\label{lem5}
\end{Lem}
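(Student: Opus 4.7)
The plan is to prove part (i) in detail; part (ii) is entirely analogous, with $f_n$ replacing $f_n^{-1}$, horizontal curves replacing vertical curves, and the unstable sector bundle $S^u$ replacing the stable bundle $S^s$. Assume $\overline{V}^{n+1}\subset V_j^{n+1}$ is a $\mu_v$-vertical curve with $\overline{V}^{n+1}\cap H_i^{n+1}\neq\emptyset$. Since $\overline{V}^{n+1}\subset V_j^{n+1}$, the intersection lies in $H_{ij}^{n+1}=H_i^{n+1}\cap V_j^{n+1}$, which is therefore nonempty, so $A_{ij}^{n}=1$ and by A1 the map $f_n^{-1}\colon H_{ij}^{n+1}\to V_{ji}^{n}$ is a homeomorphism carrying $\partial_h H_{ij}^{n+1}$ into $\partial_h V_i^{n}$.

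First I would apply Lemma \ref{lem2} (valid because $\mu_v\mu_h<1$ by A1): each of the two $\mu_h$-horizontal curves bounding $H_{ij}^{n+1}$ meets the $\mu_v$-vertical curve $\overline{V}^{n+1}$ in a single point, so $\overline{V}^{n+1}\cap H_{ij}^{n+1}$ is one connected arc whose two endpoints sit one on each horizontal boundary curve of $H_{ij}^{n+1}$. Its image under the homeomorphism $f_n^{-1}|_{H_{ij}^{n+1}}$ is a connected arc $C\equiv f_n^{-1}(\overline{V}^{n+1})\cap V_i^{n}\subset V_{ji}^{n}$ whose endpoints lie on $\partial_h V_i^{n}$, one on the top and one on the bottom horizontal boundary of $V_i^{n}$.

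The heart of the argument is to show that $C$ is the graph of a Lipschitz function of $y$ with constant $\mu_v$. Parametrize $\overline{V}^{n+1}\cap H_{ij}^{n+1}$ as $y\mapsto(v(y),y)$ on a subinterval of $I_y$; at almost every $y$ the tangent vector $(v'(y),1)$ satisfies $|v'(y)|\le\mu_v$ and hence lies in $S^{s}_{\mathcal{H}^{n+1}}$. By the chain rule the tangent to $C$ at the corresponding preimage point equals $Df_n^{-1}\cdot(v'(y),1)$, which by A3 lies in $S^{s}_{\mathcal{V}^{n}}$ and has $y$-component of absolute value at least $1/\mu>0$. Positivity of this $y$-component forces the $y$-coordinate along $C$ to be strictly monotonic, so $C$ is the graph of a single-valued function $\tilde v$; the sector bound $|\xi|\le\mu_v|\eta|$ on tangents integrates to the global Lipschitz estimate $|\tilde v(\tilde y_1)-\tilde v(\tilde y_2)|\le\mu_v|\tilde y_1-\tilde y_2|$; and the endpoint locations established above force the domain of $\tilde v$ to be all of $I_y$, so $C$ is indeed a $\mu_v$-vertical curve in $V_i^{n}$.

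The main obstacle I expect is the regularity mismatch: a $\mu_v$-vertical curve is only Lipschitz, hence differentiable merely a.e.\ (Rademacher's theorem), whereas A3 is a pointwise condition on the Jacobian $Df_n^{-1}$. The cleanest fix is to mollify $v$ into a $C^{1}$ approximant with the same Lipschitz constant, run the argument above for the smooth curve, and pass to the uniform limit using continuity of $f_n^{-1}$ together with the fact that the class of $\mu_v$-vertical curves is closed under uniform convergence. Part (ii) then follows by the identical template, applying the $Df_n$ half of A3 to propagate the (a.e.-defined) unstable-sector tangent vector of $\overline{H}^{n}$ forward under $f_n$.
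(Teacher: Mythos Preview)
Your proposal is correct and follows essentially the same route as the paper: parametrize the vertical curve, observe that its tangent lies in the stable sector, push the tangent through $Df_n^{-1}$ using A3, deduce that the second component of the image tangent never vanishes (hence $y$ is monotone along $C$), and integrate the sector bound to recover the $\mu_v$-Lipschitz condition on the preimage. The paper obtains the non-vanishing of $\dot y$ from invertibility of $Df_n^{-1}$ rather than from the expansion estimate in A3, and it simply stipulates ``take also $v\in C^1$'' instead of mollifying; it also postpones your Lemma~\ref{lem2} endpoint argument to the proof of Theorem~\ref{thm4}. These are cosmetic differences; your version is if anything slightly more careful about regularity than the paper's own proof.
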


\begin{proof} We omit the proof of ii) as it follows the same line of reasoning as i).
\newline
\\
We consider a $\mu_{v}$-vertical curve $\overline{V}^{n+1} \subset V_{j}^{n+1}$. By definition there exist an interval $T \subset \mathbb{R}$ and a function $v:T \rightarrow \mathbb{R}$ such that $\overline{V}^{n+1}$ is the graph of $v$ and also the Lipschitz condition $|v(t_{1})-v(t_{2})| \leq \mu_{v}|t_{1}-t_{2}|$ holds for a constant $\mu_{v}>0$ and every pair of points $t_{1},t_{2} \in T$.
\newline
\\
It follows from Assumption 1 that $(f_{n}^{-1})$ is a homeomorphism over $H_{ij}^{n+1}=H_{i}^{n+1} \cap V_{j}^{n+1}$. In particular a homeomorphism over $\overline{V}^{n+1} \cap H_{i}^{n+1} \not = \emptyset$. This implies that

\begin{equation}
f_{n}^{-1} ( \overline{V}^{n+1} \cap H_{i}^{n+1} ) = f_{n}^{-1} ( \overline{V}^{n+1} ) \cap f_{n}^{-1} ( H_{i}^{n+1} ) = f_{n}^{-1} ( \overline{V}^{n+1} ) \cap V_{i}^{n} \not = \emptyset
\end{equation}

\noindent
Since the curve $\overline{V}^{n+1}$ can be parametrized by $(v(t),t)|_{t \in T}$ (take also $v \in C^{1}$) then this last subset $f_{n}^{-1} ( \overline{V}^{n+1} ) \cap V_{i}^{n}$ can also have a parametrization but over a smaller domain $T^{*} \subset T$,

\begin{equation}
\left( \begin{array}{c} x(t) \\ y(t) \end{array} \right) = f_{n}^{-1}(v(t),t) \quad \text{with} \quad t \in T^{*} \equiv \lbrace \overline{t} \in T : (v(\overline{t}),\overline{t}) \in H_{i}^{n+1} \rbrace
\end{equation}

\noindent
The image of ant tangent vector of $\overline{V}^{n+1}$ under $Df_{n}^{-1}$ has the form

\begin{equation}
\left( \begin{array}{c} \dot{x}(t) \\ \dot{y}(t) \end{array} \right) = Df_{n}^{-1}(v(t),t) \cdot \left( \begin{array}{c} \dot{v}(t) \\ 1 \end{array} \right) \quad \text{with} \quad \left( \begin{array}{c} \dot{v}(t) \\ 1 \end{array} \right) \in S^{s}_{{\cal H}^{n+1}} , \quad \forall t \in T^{*}
\end{equation}

This last relation follows directly  from the Lipschitz condition:

\begin{equation}
|\dot{v}(t)| \leq \begin{array}{c} \lim \sup \\ \text{{\small $\epsilon \rightarrow 0$}} \\ \text{{\small $(t+\epsilon ) \in T^{*}$}} \end{array} \left| \frac{v(t+\epsilon )-v(t)}{\epsilon } \right| \leq \begin{array}{c} \lim \sup \\ \text{{\small $t_{1} \in T^{*}$}} \\ \text{{\small $t_{1} \not = t$}} \end{array} \frac{|v(t_{1})-v(t)|}{|t_{1}-t|} \leq \begin{array}{c} \lim \sup \\ \text{{\small $t_{1} \in T^{*}$}} \\ \text{{\small $t_{1} \not = t$}} \end{array} \frac{\mu_{v}|t_{1}-t|}{|t_{1}-t|} = \mu_{v}
\end{equation}

By applying Assumption 3 we obtain that the tangent vectors belong to $S^{s}_{{\cal V}^{n}}$,

\begin{equation}
|\dot{x}(t)| \hspace{0.1cm} \leq \hspace{0.1cm} \mu_{v} \cdot |\dot{y}(t)| \quad , \quad \forall t \in T^{*}
\end{equation}

\noindent
Moreover, as we also assume that $(f_{n}^{-1})\in C^{1}$, any tangent vector

$$\left( \begin{array}{c} \dot{x}(t) \\ \dot{y}(t) \end{array} \right) = Df_{n}^{-1}(v(t),t) \cdot \left( \begin{array}{c} \dot{v}(t) \\ 1 \end{array} \right) \text{ cannot be equal to } \left( \begin{array}{c} 0 \\ 0 \end{array} \right) \text{ at any point } t \in T^{*}.$$

\noindent
From these two relations it follows that $\dot{y}(t)$ cannot change its sign in the entire domain $T^{*}$. Consequently for every pair of points $(x_{1},y_{1}),(x_{2},y_{2}) \in f_{n}^{-1} ( \overline{V}^{n+1} ) \cap V_{i}^{n}$ there exist $t_{1},t_{2} \in T^{*}$ such that $(x_{k},y_{k})=(x(t_{k},y(t_{k}))$, $(k=1,2)$ and we have the inequality

$$|x_{1}-x_{2}|=|x(t_{1})-x(t_{2})|= \left| \int_{t_{2}}^{t_{1}} \dot{x}(t)dt \right| \leq \int_{t_{2}}^{t_{1}} |\dot{x}(t)|dt \leq \mu_{v}\int_{t_{2}}^{t_{1}} |\dot{y}(t)|dt =$$

\begin{equation} = \mu_{v} \left| \int_{t_{2}}^{t_{1}} \dot{y}(t)dt \right| = \mu_{v} |y(t_{1})-y(t_{2})| = \mu_{v}|y_{1}-y_{2}|
\end{equation}

\noindent
and this result implies  that $f_{n}^{-1} ( \overline{V}^{n+1} ) \cap V_{i}^{n}$ is a $\mu_{v}$-vertical curve. 
\end{proof}

\begin{proof}[Proof of Theorem \ref{thm4}] The theorem will be proved by verifying the following steps.
\newline
\\
\underline{Step 1:} Let $\overline{V}^{n+1} \subset V_{j}^{n+1}$ be a $\mu_{v}$-vertical curve. Then $f_{n}^{-1}(\overline{V}^{n+1})\cap V_{i}^{n}$ is a $\mu_{v}$-vertical curve for every $i \in I$ such that $\overline{V}^{n+1}\cap H_{i}^{n+1} \not = \emptyset$.
\newline
\\
\underline{Step 2:} Let $V^{n+1}$ be a $\mu_{v}$-vertical strip which intersects $V_{j}^{n+1}$ fully. Then $f_{n}^{-1}(V^{n+1})\cap V_{i}^{n}$ is a $\mu_{v}$-vertical strip that intersects $V_{i}^{n}$ fully for every $i \in I$ such that $V^{n+1}\cap H_{i}^{n+1} \not = \emptyset$.
\newline
\\
\underline{Step 3:} Show that $d(\widetilde{V}_{i}^{n}) \leq (\mu /( 1-\mu_{h}\mu_{v})) \cdot d(V^{n+1})$ for $\widetilde{V}_{i}^{n}=f_{n}^{-1}(V^{n+1})\cap V_{i}^{n}$.
\newline
\\
We omit the part of the proof dealing with horizontal strips since it follows from the same reasoning used to prove the part concerning vertical strips.
\newline
\newline
\\
We begin with Step 1. Let $\overline{V}^{n+1} \subset V_{j}^{n+1}$ be a $\mu_{v}$-vertical curve. For each $i \in I$ such that $\overline{V}^{n+1}\cap H_{i}^{n+1} \not = \emptyset$, by applying A1 we obtain that $H_{ij}^{n+1}=V_{j}^{n+1} \cap H_{i}^{n+1} \not = \emptyset$ is a $\mu_{h}$-horizontal strip contained in $V_{j}^{n+1}$. Since implicitly we are taking $\overline{V}^{n+1}$ as one of the two components of the vertical boundary of a vertical strip $V^{n+1}$ intersecting $V_{j}^{n+1}$ fully, the curve $\overline{V}^{n+1}$ intersects $\partial_{h}H_{i}^{n+1}$ in exactly two points.
\newline
\\
Also because $f_{n}^{-1}$ maps the horizontal boundaries of each subset $H_{ij}^{n+1}(= H_{i}^{n+1} \cap V_{j}^{n+1})$ onto the horizontal boundaries of $V_{i}^{n}$, we have that $f_{n}^{-1}(\overline{V}^{n+1})\cap V_{i}^{n}$ is a curve linking the two horizontal boundaries of $V_{i}^{n}$. Finally if we apply Lemma \ref{lem5} to this curve it follows that $f_{n}^{-1}(\overline{V}^{n+1}) \cap V_{i}^{n}$ is also a $\mu_{v}$-vertical curve.
\newline
\\

To prove Step 2 we apply Step 1 to the $\mu_{v}$-vertical boundaries of the $\mu_{v}$-vertical strip $V^{n+1}$ which intersects $V_{j}^{n+1}$ fully. It then follows that $f_{n}^{-1}(V^{n+1}) \cap V_{i}^{n}$ is also a $\mu_{v}$-vertical strip for every $i \in I$ such that $V^{n+1} \cap H_{i}^{n+1} \not = \emptyset$. Moreover this last strip intersects each $V_{i}^{n}$ fully because of the geometric considerations in Step 1.
\newline
\\
For proving Step 3 first we need to fix an iteration $n \in \mathbb{Z}$ and an index $i \in I$. The width of each $\mu_{v}$-vertical strip $\widetilde{V}_{i}^{n}$ will be the distance between two points $p_{0},p_{1} \in \widetilde{V}_{i}^{n}$ with the same $y$-component and located in separate vertical boundaries, $d(\widetilde{V}_{i}^{n})= | p_{1}-p_{0} |$.
\begin{figure}[h!]
\centering
\includegraphics[scale=0.4]{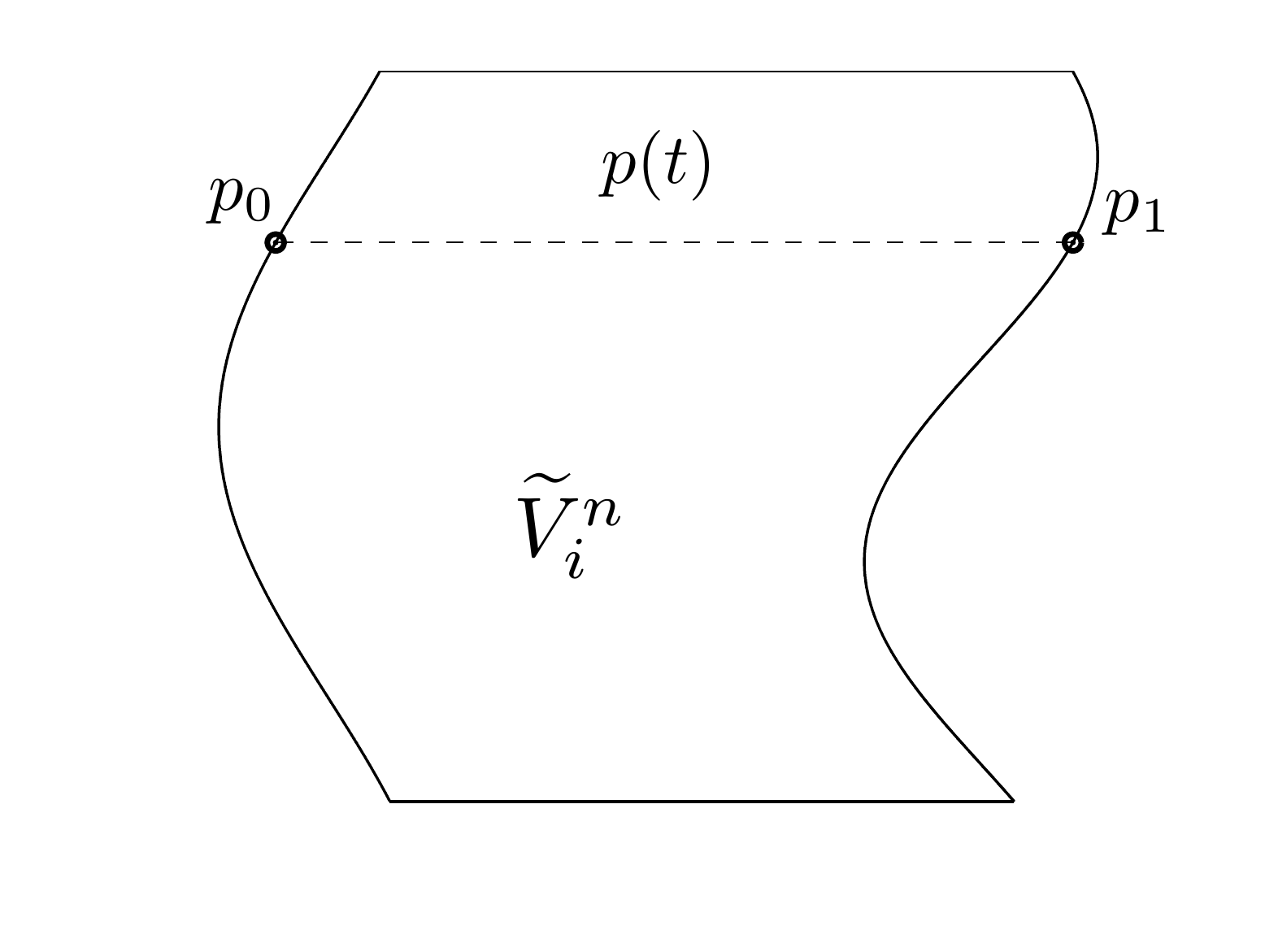}
\caption{The segment $p(t)=tp_{1}+(1-t)p_{0}$, $t \in [0,1]$, which represents the maximum amplitude of $\widetilde{V}_{i}^{n}$ is obviously a $\mu_{h}$-horizontal curve.}
\end{figure} 
\label{fig4}
\noindent
\\
By taking segment $p(t)$ considered in Figure \ref{fig4}, $\dot{p}(t)=p_{1}-p_{0}$ is a vector with its $y$-component equal to zero. Therefore $\dot{p}(t) \in S^{u}_{{ \cal V}^{n}}$, $\forall t \in [0,1]$. Now we have that the curve $f_{n}(p(t)) \equiv z(t)=(x(t),y(t))$ located in $D_{n+1}$ is a $\mu_{h}$-horizontal curve because of the second part of Lemma \ref{lem5},

\begin{equation}\text{Moreover A3 states that} \quad \dot{z}(t)=D(f_{n}(p(t)))=Df_{n}(p(t))\cdot \dot{p}(t) \in S^{u}_{{ \cal H}^{n+1}}\end{equation}
\\
Furthermore since the graph of $z(t)=(x(t),y(t))$ is a $\mu_{h}$-horizontal curve, we obtain that
\begin{equation}|y(1)-y(0)| \leq \mu_{h} |x(1)-x(0)| \quad \rightarrow \quad |y_{1}-y_{0}| \leq \mu_{h} |x_{1}-x_{0}|\end{equation}
by denoting $(x_{i},y_{i}) \equiv (x(i),y(i))=z(i)=f_{n}(p(i))=f_{n}(p_{i})$ for $i=0,1$.

\begin{figure}[h!]
\centering
\includegraphics[scale=0.5]{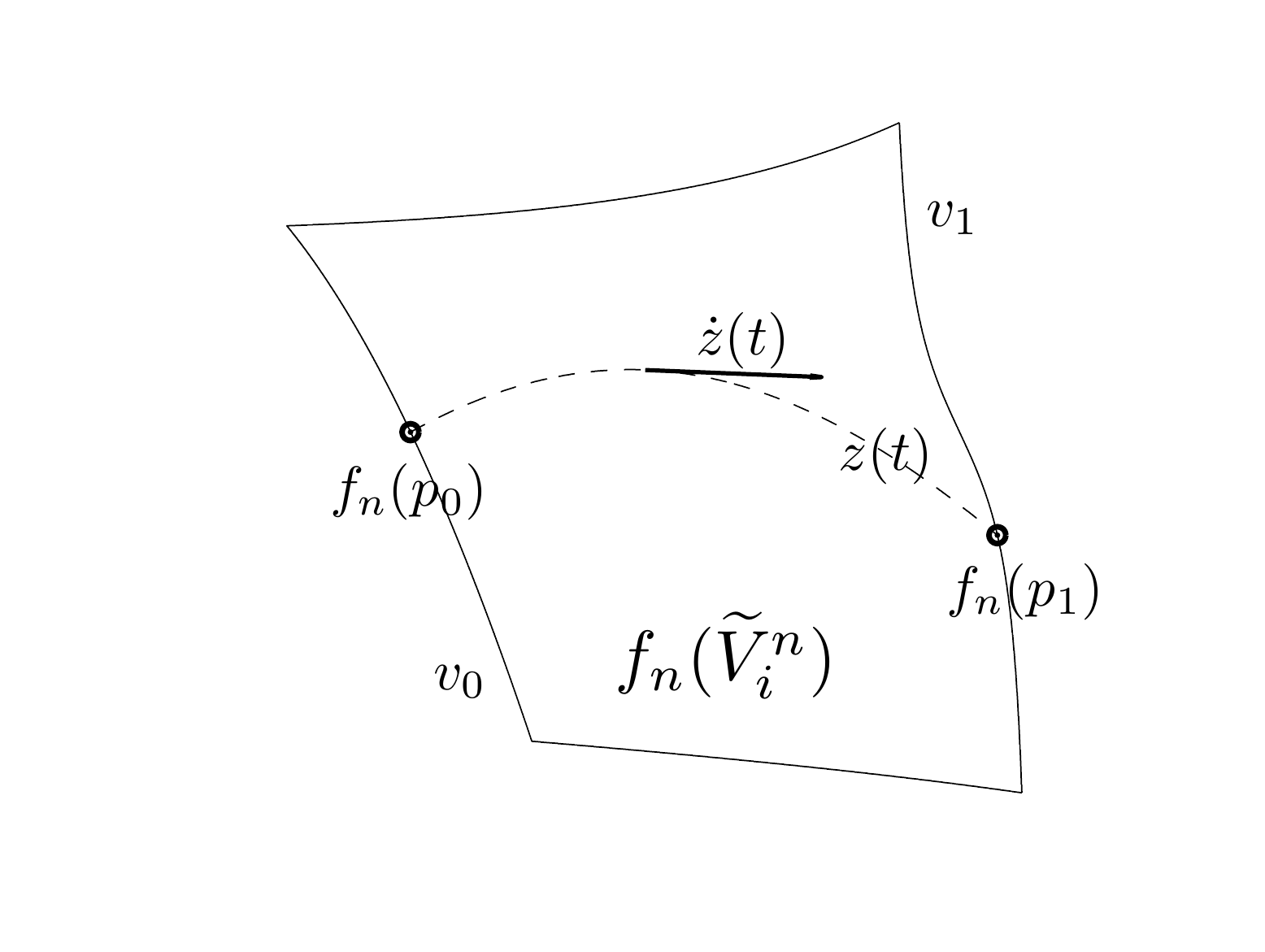}
\caption{$f_{n}(p_{0})$ and $f_{n}(p_{1})$ are on the graphs of two distinct $\mu_{v}$-vertical curves, which we denote by $v_{0}$ and $v_{1}$ respectively.}
\label{fig5}
\end{figure}
\noindent
\\
Using this last fact and also the geometric considerations in Figure \ref{fig5}, it follows that

$$|x_{1}-x_{0}| = |v_{1}(y_{1})-v_{0}(y_{0})| \leq |v_{1}(y_{1})-v_{1}(y_{0})|+|v_{1}(y_{0})-v_{0}(y_{0})| \leq$$

$$\leq \mu_{v}|y_{1}-y_{0}| + d(V^{n+1}) \leq \mu_{v} \mu_{h} |x_{1}-x_{0}|+ d(V^{n+1}) \rightarrow$$

\begin{equation}\rightarrow \quad |x_{1}-x_{0}| \leq \frac{d(V^{n+1})}{(1-\mu_{h}\mu_{v})}\end{equation}

Also as a result  of the last part of Assumption 3 there exists a positive constant $\mu$, which we impose to be $\mu < 1-\mu_{h}\mu_{v}$, such that

$$|\dot{x}(t)| \geq \left( \frac{1}{\mu} \right) |\dot{p}(t)| = \left( \frac{1}{\mu} \right) |p_{1}-p_{0}| \quad \text{and then}$$

\begin{equation}d(\widetilde{V}_{i}^{n})=|p_{1}-p_{0}| \leq \mu \int_{0}^{1} |\dot{x}(t)|dt = \mu \left| \int_{0}^{1} \dot{x}(t)dt \right| = \mu |x_{1}-x_{0}|\end{equation}

Note that the two expressions containing the integrals are equal since $\dot{x}(t)$ does not change its sign at any point. This is due to the fact that the graph of $z(t)=(x(t),y(t))$ is a $\mu_{h}$-horizontal curve.
\newline
\\
Finally we arrive at the result,
\begin{equation}d(\widetilde{V}_{i}^{n})=|p_{1}-p_{0}| \leq \mu |x_{1}-x_{0}| \leq \frac{\mu}{(1-\mu_{h}\mu_{v})}d(V^{n+1})\end{equation}
$$\text{and} \quad \nu_{v}=\frac{\mu}{(1-\mu_{h}\mu_{v})}<1 \quad \text{will be the required constant for Assumption 2.}$$
\end{proof}

\section{Nonautonomous H\'{e}non map}
\label{sec:nahmap}

We have now developed the necessary tools for proving the existence of a  chaotic invariant set for  the nonautonomous H\'{e}non map. Recall our general notation for nonautonomous dynamics (a sequence of maps defined on a sequence of domains), $\lbrace f_{n},D_{n} \rbrace_{n=-\infty}^{+\infty}$.
\newline
\\
We will construct  domains $D_{n}$ for the nonautonomous H\'enon map, each of them containing an associated pair of horizontal strips and another of vertical strips. Moreover, the transition matrices will be shown to be identical for each map $f_n$, with  $A= \left( \begin{array}{cc} 1 & 1 \\ 1 & 1 \end{array} \right)$ for each iteration $n$.

Recall that the autonomous  H\'{e}non map has the form:

\begin{equation} H(x,y)=(A+By-x^{2},x) , \quad \text{with inverse function} \quad  H^{-1}(x,y)=(y, (x-A+y^{2})/B) \end{equation}

\noindent
Following \cite{Dev79}, sufficient conditions for the existence of a chaotic invariant set in the autonomous context can be proven when the parameters satisfy the following inequalities:

\begin{equation}
A > A_{2}=\frac{(5+2\sqrt{5})(1+|B|)^{2}}{4} \quad , \quad A_{2}=5+2\sqrt{5} \approx 9.47 \quad \text{in case} \quad B= \pm 1
\end{equation}
\\Note that when  $B=-1$ the map is orientation-preserving and area-preserving. For our version of the nonautonomous H\'{e}non map,  we will take $B=-1$ for the sequence of maps $\lbrace f_{n},D_{n} \rbrace_{n=-\infty}^{+\infty}$  in order to retain these  properties, but we will allow  $A$ to vary for each iteration $n$. Therefore, we will take:

\begin{equation} f_{n}(x,y)=(A(n)-y-x^{2},x) \quad , \quad f_{n}^{-1}(x,y)=(y,A(n)-x-y^{2}) \end{equation}

where 

\begin{equation} A(n)=9.5 + \epsilon \cdot \cos (n) \quad \text{with} \quad \epsilon =0.1. \end{equation}

This choice is motivated by the fact that $A_{2}=5+2\sqrt{5} \approx 9.47$ is the minimum threshold for parameter $A$ for which the autonomous H\'{e}non map satisfies the autonomous versions of Assumptions 1 and 3 of the Conley-Moser conditions.

In the following we will prove that the  nonautonomous H\'{e}non map satisfies the conditions described in Theorem \ref{mainthm}. In particular, we will prove the following theorem.
\\
\begin{theorem}
If $A^{*} \geq 9.5$ then the nonautonomous H\'{e}non map $f_{n}=(A(n)-y-x^{2},x)$ with $A(n)=A^{*} + \epsilon \cdot \cos (n)$, $\epsilon =0.1$ has  a nonautonomous chaotic invariant set in $\mathbb{R}^{2}$.
\end{theorem}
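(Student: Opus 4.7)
The plan is to reduce to checking hypotheses A1 and A3 for the sequence $\lbrace f_n, D_n \rbrace_{n=-\infty}^{+\infty}$, invoke Theorem \ref{thm4} to upgrade to A1 plus A2, and then conclude by Theorem \ref{mainthm}. The key structural observation driving everything is that
\begin{equation}
Df_n(x,y) = \begin{pmatrix} -2x & -1 \\ 1 & 0 \end{pmatrix}, \qquad Df_n^{-1}(x,y) = \begin{pmatrix} 0 & 1 \\ -1 & -2y \end{pmatrix},
\end{equation}
so the Jacobian is independent of $n$ and of $A(n)$, depending only on one coordinate. Combined with the fact that $A(n)$ takes values in the compact interval $[A^{*}-\epsilon, A^{*}+\epsilon]$, this is what makes uniform-in-$n$ Conley-Moser constants plausible.

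First, I would choose a common domain $D_n = D = [-R,R]^2$ with $R = R(A^{*}, \epsilon)$ adapted from the autonomous Devaney-Nitecki construction, so that for every $n$ the set $f_n^{-1}(D)\cap D$ consists of two disjoint $\mu_v$-vertical strips $V_1^n, V_2^n$ and $f_n(D)\cap D$ consists of two disjoint $\mu_h$-horizontal strips; the transition matrix is then forced to be $A^n = \begin{pmatrix} 1 & 1 \\ 1 & 1 \end{pmatrix}$ at every $n$. Verification of A1 is then a geometric check: the image of a vertical line $\lbrace x = c\rbrace$ under $f_n$ is the horizontal line $\lbrace y = c\rbrace$, while the image of a horizontal line $\lbrace y = c'\rbrace$ is the parabola $\lbrace x = A(n) - c' - (\cdot)^2 \rbrace$, so $\partial_h V_i^n \subset \partial_h D$ is mapped to $\partial_h H_i^{n+1}$ which sits in $\partial_h V_j^{n+1}$ by the choice of $R$, and the Lipschitz constants $\mu_h, \mu_v$ are read off from pointwise bounds on $|x|$ and $|y|$ on the strips.

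Verification of A3 is a direct Jacobian computation. For $(\xi,\eta) \in S^u_{\mathcal{V}^n}$ with $|\eta| \leq \mu_h |\xi|$, the image is $(-2x\xi - \eta, \xi)$, and using the bound $|-2x\xi - \eta| \geq (2|x| - \mu_h)|\xi|$ both the sector inclusion $|\xi| \leq \mu_h |-2x\xi - \eta|$ and the expansion estimate $|-2x\xi - \eta| \geq \mu^{-1}|\xi|$ reduce to a uniform lower bound $|x| \geq x_0$ on $\bigcup_n(V_1^n \cup V_2^n)$, with $x_0 \geq \tfrac{1}{2}(\mu_h + \max(\mu_h^{-1}, \mu^{-1}))$ sufficing. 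The analogous computation for $Df_n^{-1}$ on $\bigcup_n(H_1^{n+1} \cup H_2^{n+1})$ yields the stable sector condition from a lower bound $|y| \geq y_0$. Once $\mu_h \mu_v < 1$ and $\mu < 1 - \mu_h \mu_v$ are arranged, Theorem \ref{thm4} supplies A2 and Theorem \ref{mainthm} produces the chaotic invariant sequence $\lbrace \Lambda_n \rbrace$.

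The hard part will be managing the tight numerical margin. The threshold $A_2 = 5 + 2\sqrt{5} \approx 9.47$ for the autonomous construction lies only just below $A^{*} = 9.5$, and since $\epsilon = 0.1$ pushes $\min_n A(n) = 9.4$ below $A_2$, one cannot simply apply the Devaney-Nitecki argument pointwise at each $n$. The nonautonomous flexibility must be used: the strips $V_i^n$ are allowed to depend on $n$ (slightly contracted relative to the autonomous choice at those $n$ where $A(n)$ is small), and the constants $\mu_h, \mu_v, \mu, x_0, y_0$ must be chosen so that all required strict inequalities close up simultaneously and uniformly in $n$. This interlocking system of constants, rather than any single estimate, is the technical heart of the proof.
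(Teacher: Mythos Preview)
Your overall strategy matches the paper's exactly: verify A1 and A3 with uniform constants, invoke Theorem~\ref{thm4} to obtain A2, and conclude by Theorem~\ref{mainthm}. The Jacobian reduction to a lower bound on $|x|$ (resp.\ $|y|$) is also precisely what the paper does, with the paper's specific constants being $R=1+\sqrt{1+A^{*}+\epsilon}$, $\mu_h=\mu_v=0.615$, and the critical threshold $\tfrac{1}{2}(\mu_v+\mu_v^{-1})\approx 1.1205$.

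There is, however, a concrete gap in your reduction. You write that the sector and expansion conditions reduce to ``a uniform lower bound $|x|\ge x_0$ on $\bigcup_n(V_1^n\cup V_2^n)$''. But A3 is only required to hold on the smaller sets $\mathcal{V}^n=\bigcup_{i,j}V_{ji}^n$ and $\mathcal{H}^{n+1}=\bigcup_{i,j}H_{ij}^{n+1}$, and this distinction is exactly what rescues the argument. On the full strip $V_1^n$ the inner parabolic boundary is $x=\sqrt{A(n)-R-y}$, whose minimum over $y\in[-R,R]$ is $\sqrt{A(n)-2R}\approx\sqrt{0.9}\approx 0.95<1.1205$ at the worst $n$; so your stated bound fails there. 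This is precisely the ``tight numerical margin'' you are worried about, and it is why you reach for an ad hoc contraction of the strips.

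The paper does not contract anything. Instead it observes that $\mathcal{H}^{n+1}$ is the intersection of the horizontal parabolic strip $f_n(D)\cap D$ with the vertical parabolic strip $f_{n+1}^{-1}(D)\cap D$, so it is bounded by \emph{four} parabolas rather than two. One then checks, by comparing where the lines $\{Y=\pm 1.1205\}$ meet the inner horizontal parabola $X=A(n)-R-Y^2$ versus the outer vertical parabola $Y=A(n{+}1)+R-X^2$, that all four components of $\mathcal{H}^{n+1}$ lie in $\{|y|>1.1205\}$; the worst-case-in-$n$ numbers give roughly $3.87<3.89$, so the inequality closes with a small but genuine margin. The analogous statement for $\mathcal{V}^n$ follows by the symmetry $f_n^{-1}(x,y)=(y,A(n)-x-y^2)$. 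Once you redirect your A3 verification from $D_V^n$ to $\mathcal{V}^n$ (and from $D_H^{n+1}$ to $\mathcal{H}^{n+1}$), no further modification of the strips is needed and your outline goes through.
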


\begin{proof} We carry out the proof for the specific case where $A_{0}=9.5$. The case for $A_{0}>9.5$  follows similar reasoning as for the case $A_{0}=9.5$, with the main difference being that  some values in the inequalities appearing when checking Assumption 3 must be changed. We begin with the first Conley-Moser condition.
\newline
\\
\underline{Assumption 1}. The domain $D_{n}$ on which each function $f_{n}$ will be defined is the square

\begin{equation} D_{n}=D=[-R,R] \times [-R,R] \quad \quad \text{with} \quad \quad R=\sup_{n \in \mathbb{Z}} R(n)= 1+\sqrt{1+A(0)} \approx 4.25 \end{equation}
analogously to the domain considered for the autonomous H\'enon map.
\newline
\\
The horizontal strips and the vertical strips associated to any iteration $n \in \mathbb{Z}$ will be taken as

\begin{equation} 
D_{H}^{n+1} \equiv f_{n}(D) \cap D \quad , \quad D_{V}^{n} \equiv f_{n}^{-1}(D) \cap D \end{equation}
\\
and since $f_{n}$ is a homeomorphism we also note that vertical strips ``move'' to  horizontal strips in forward iteration,
\begin{equation} f_{n}(D_{V}^{n})=f_{n} \left( f_{n}^{-1}(D) \cap D \right) = (f_{n} \circ f_{n}^{-1})(D) \cap f_{n}(D) = D_{H}^{n+1} \end{equation}
\\
Moreover the index $I$ indicating the number of strips in either $D_{H}^{n}$ or $D_{V}^{n}$ is $I=\lbrace 1,2 \rbrace$ and the strips are defined by
\\
$$H_{1}^{n+1} \equiv f_{n}(D) \cap \left( [-R,R] \times [0,R] \right) \quad , \quad H_{2}^{n+1} \equiv f_{n}(D) \cap \left( [-R,R] \times [-R,0] \right)$$
\begin{equation} V_{1}^{n} \equiv f_{n}^{-1}(D) \cap \left( [0,R] \times [-R,R] \right) \quad , \quad V_{2}^{n} \equiv f_{n}^{-1}(D) \cap \left( [-R,0] \times [-R,R] \right) \end{equation}
\\
These are determined by the images of $D$ with respect to $f_{n}$ and $f_{n}^{-1}$ for every $n \in \mathbb{Z}$. They result easy to compute. Let
\\
$$L_{1}= \lbrace (x,y) \in D \text{ } | \text{ } y=R \rbrace \quad , \quad L_{2}= \lbrace (x,y) \in D \text{ } | \text{ } y=-R \rbrace$$
$$L_{3}= \lbrace (x,y) \in D \text{ } | \text{ } x=R \rbrace \quad , \quad L_{4}= \lbrace (x,y) \in D \text{ } | \text{ } x=-R \rbrace$$
\\
the segments which conform the boundary of $D$. Their images with respect $f_{n}$ and $f_{n}^{-1}$ are either another segment or a parabola, and as $f_{n}$ is a homeomorphism, both $f_{n}(D)$ and $f_{n}^{-1}(D)$ are two strips with a parabolic form.

\begin{figure}[h!]
\centering
\includegraphics[scale=0.40]{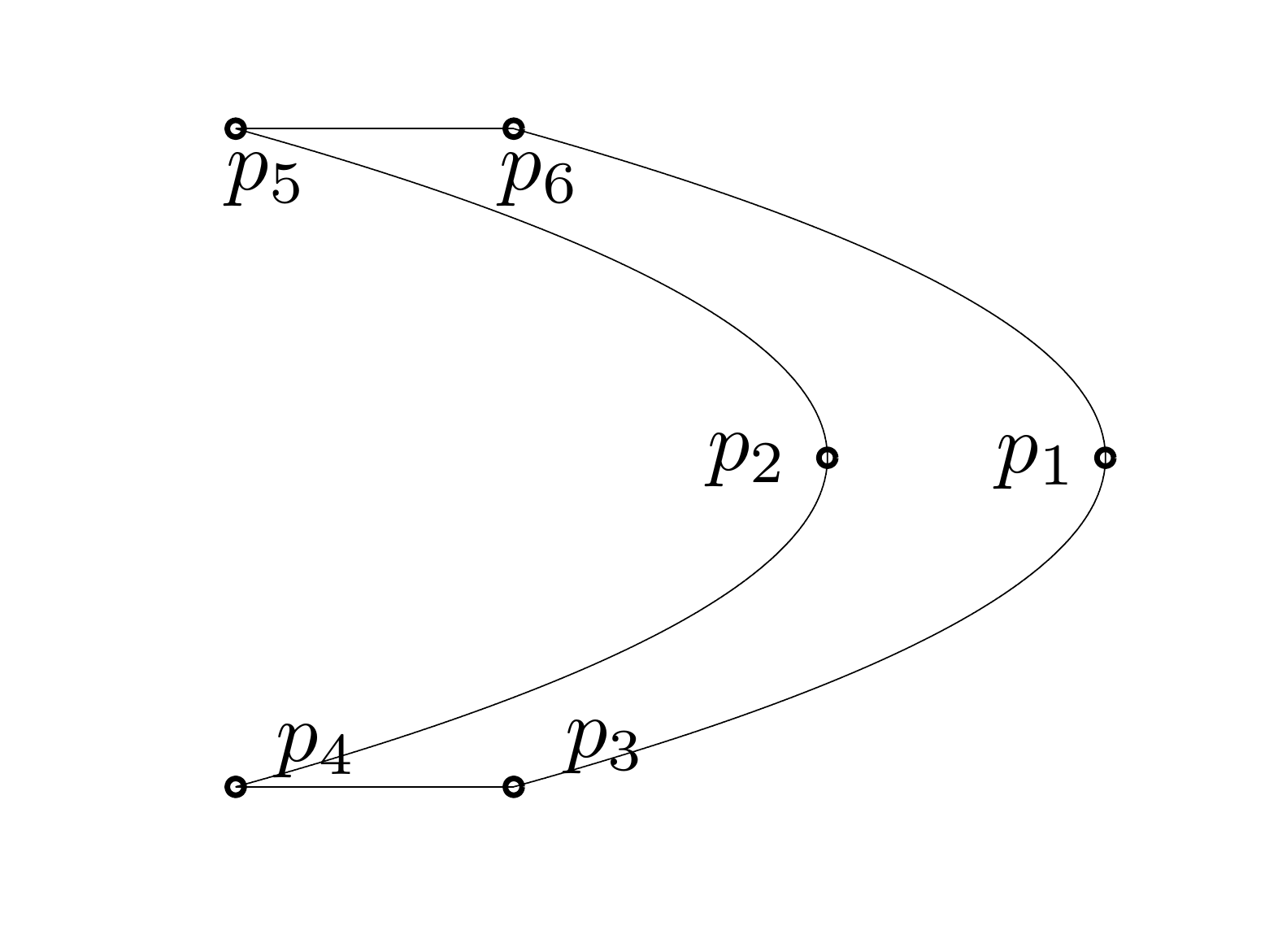} \quad \includegraphics[scale=0.40]{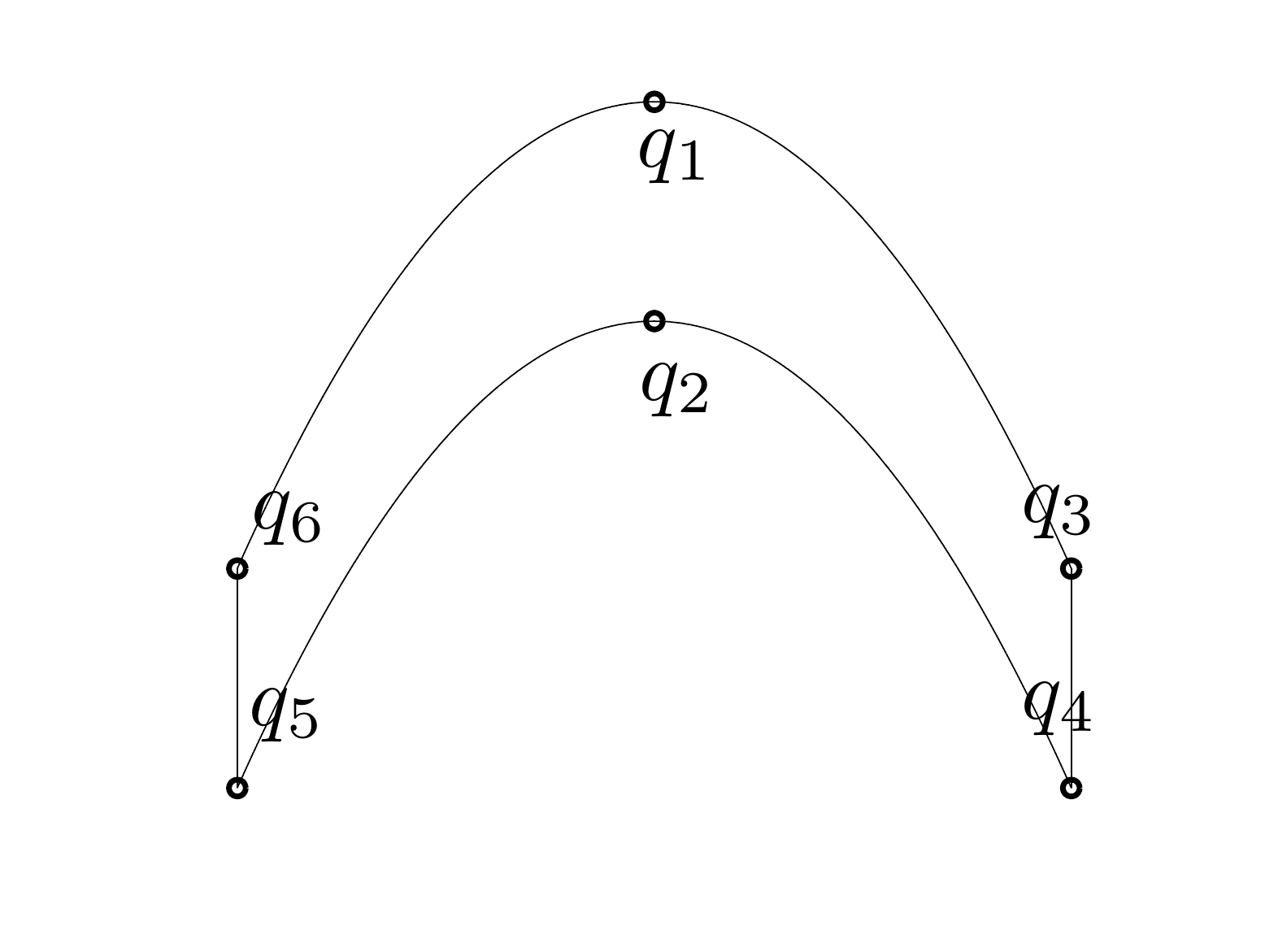}
\caption{$f_{n}(D)$ and $f_{n}^{-1}(D)$ take these two shapes, respectively, for any given $n \in \mathbb{Z}$. The set of points $p_{1},p_{2},p_{3},p_{4},p_{5},p_{6}$ and $q_{1},q_{2},q_{3},q_{4},q_{5},q_{6}$ determine the height and length of both geometric forms.}
\label{fig6}
\end{figure}

\noindent
\\
The key points of $f_{n}(D)$ and $f_{n}^{-1}(D)$ shown in Figure \ref{fig6} have the following coordinates,

$$p_{1} \equiv (A(n)+R,0) \quad , \quad p_{2} \equiv (A(n)-R,0) \quad , \quad p_{3} \equiv (A(n)+R-R^{2},-R)$$

$$p_{4} \equiv (A(n)-R-R^{2},-R) \quad , \quad p_{5} \equiv (A(n)-R-R^{2},R) \quad , \quad p_{6} \equiv (A(n)+R-R^{2},R)$$

$$q_{1} \equiv (0,A(n)+R) \quad , \quad q_{2} \equiv (0,A(n)-R) \quad , \quad q_{3} \equiv (R,A(n)+R-R^{2})$$

$$q_{4} \equiv (R,A(n)-R-R^{2}) \quad , \quad q_{5} \equiv (-R,A(n)-R-R^{2}) \quad , \quad q_{6} \equiv (-R,A(n)+R-R^{2})$$

The coordinates of the these points satisfy $A(n) > 2R$, $\forall n \in \mathbb{Z}$ and

\begin{equation}
A(n)+R-R^{2}=A(n)+1+\sqrt{1+A(0)}-(1+\sqrt{1+A(0)})^{2}=(A(n)-A(0))-R \leq -R
\end{equation}

\noindent
with strict inequality when $n \not = 0$. Only in case $n=0$, the points $p_{6},p_{3}=q_{6},q_{3}$ are inside the domain $D$ and actually these are three vertices of the square $D$. In any case, it follows that the points $p_{1},p_{2},p_{4},p_{5}$ and $q_{1},q_{2},q_{4},q_{5}$ do not belong to $D$ for any $n$.

We denote the arguments of the parabolic curves by $X$ and $Y$, so that their equations take the form:

\begin{equation} Y=\sqrt{A(n)-R-X} \quad \text{in the horizontal case}, \end{equation}

\begin{equation} X=\sqrt{A(n)-R-Y} \quad \text{in the vertical case}. \end{equation}
\\
With this notation  the absolute value of the derivatives of these functions take the form:

\begin{equation} \left| \frac{dY}{dX} \right| = \frac{1}{2\sqrt{A(n)-R-X}} \quad , \quad \left| \frac{dX}{dY} \right| = \frac{1}{2\sqrt{A(n)-R-Y}} \end{equation}
\\
and their maximum values are assumed when $X=R$ and $Y=R$, respectively. Therefore we have:

$$\frac{1}{2\sqrt{A(n)-2R}} \leq \frac{1}{2\sqrt{9.5 + \epsilon \cos (n) - 2R}} \approx \frac{1}{2 \sqrt{9.5 - 0.1 - 8.5 }} = \frac{1}{2 \sqrt{0.9}} \approx 0.527$$
\newline
\\
Due to reasons explained later, for convenience we will choose the thresholds $\mu_{h}=\mu_{v}=0.615$ for the maximum values that the slopes of the horizontal and vertical boundaries can assume, respectively. Using this fact, one can conclude that $H_{i}^{n+1}$ is a $\mu_{h}$-horizontal strip and $V_{i}^{n}$ a $\mu_{v}$-vertical strip for every $i \in I$ and $n \in \mathbb{Z}$. Moreover, $\mu_{h} \cdot \mu_{v} = (0.615)^{2} = 0.378225 < 1$.  This proves part of Assumption 1.
\newline
\\
Furthermore, for any $i,j \in I$ and $n \in \mathbb{Z}$ we have that the horizontal boundaries of $f_{n}(V_{i}^{n})=H_{i}^{n+1}$ are two $\mu_{h}$-horizontal curves which link the left and right sides of the square $D$. Since the two $\mu_{v}$-vertical curves bounding $\partial_{v}V_{j}^{n+1}$ link the upper and the bottom sides of $D_{n+1}$, both boundaries $\partial_{h}f_{n}(V_{i}^{n})$ and $\partial_{v}V_{j}^{n+1}$ intersect in four different points. From this fact it follows that $H_{ij}^{n+1}=f_{n}(V_{i}^{n}) \cap V_{j}^{n+1}$ is a $\mu_{h}$-horizontal strip contained in $V_{j}^{n+1}$.
\newline
\\
Since $f_{n}$ is a homeomorphism for every $n \in \mathbb{Z}$,

$$ f_{n} \quad \text{maps} \quad V_{ji}^{n}=f_{n}^{-1}(V_{j}^{n+1}) \cap V_{i}^{n} = f_{n}^{-1} \left( f_{n}(V_{i}^{n}) \cap V_{j}^{n+1} \right) \quad \text{onto} \quad H_{ij}^{n+1} \quad \text{and} $$

\begin{equation}f_{n}^{-1} \left( \partial_{h}H_{ij}^{n+1} \right) \subset \partial_{h}V_{i}^{n} \quad \text{because by construction} \quad \partial_{h} H_{ij}^{n+1} \subset \partial_{h}H_{i}^{n+1} \end{equation}

This can be checked by an easy computation.
\newline
\\
Therefore the  nonautonomous H\'{e}non map satisfies Assumption 1.
\newline
\\
\underline{Assumption 3}. To begin our verification that A3 is also satisfied, we need to recall  the notation for several concepts developed earlier:

\begin{equation} Df_{n}(x,y)= \left( \begin{array}{cc} -2x & -1 \\ 1 & 0 \end{array} \right) \quad , \quad Df_{n}^{-1}(x,y)= \left( \begin{array}{cc} 0 & 1 \\ -1 & -2y \end{array} \right) \end{equation}

$$ S^{u}_{{ \cal K}} = \lbrace (\xi_{z},\eta_{z}) \in \mathbb{R}^{2} \text{ } | \text{ } | \eta_{z} | \leq \mu_{h} | \xi_{z} | \text{, } z \in { \cal K} \rbrace $$

$$ S^{s}_{{ \cal K}} = \lbrace (\xi_{z},\eta_{z}) \in \mathbb{R}^{2} \text{ } | \text{ } | \xi_{z} | \leq \mu_{v} | \eta_{z} | \text{, } z \in { \cal K} \rbrace $$
\\
with ${\cal K}$ being either ${\cal V}^{n}$ or ${\cal H}^{n+1}$.
\newline
\\
Now given any point $z_{0}=(x_{0},y_{0}) \in { \cal H}^{n+1}$ and any $(\xi_{z_{0}} , \eta_{z_{0}}) \in S^{s}_{{ \cal H}^{n+1}}$ (which by definition, $|\xi_{z_{0}}| \leq \mu_{v} |\eta_{z_{0}}|$) we have that

$$Df_{n}^{-1}(z_{0}) \cdot (\xi_{z_{0}},\eta_{z_{0}})= \left( \begin{array}{cc} 0 & 1 \\ -1 & -2y_{0} \end{array} \right) \cdot \left( \begin{array}{c} \xi_{z_{0}} \\ \eta_{z_{0}} \end{array} \right) = \left( \begin{array}{c} \eta_{z_{0}} \\ -\xi_{z_{0}}-2y_{0}\eta_{z_{0}} \end{array} \right)$$

\noindent
belongs to $S^{s}_{{ \cal V}^{n}}$ if and only if the inequality

\begin{equation} 
|\eta_{z_{0}}| \leq \mu_{v} \cdot |\xi_{z_{0}}+2y_{0}\eta_{z_{0}}| \quad \quad \text{holds.} 
\label{61}
\end{equation}
\\
Since it is also true that

$$ \mu_{v} \cdot |\xi_{z_{0}}+2y_{0}\eta_{z_{0}}| \geq  \mu_{v} \cdot \left[ 2|y_{0}||\eta_{z_{0}}|-|\xi_{z_{0}}| \right] \geq$$

\begin{equation} \geq \mu_{v} \cdot \left[ 2|y_{0}||\eta_{z_{0}}|-\mu_{v}|\eta_{z_{0}}| \right] = \left( 2|y_{0}|\mu_{v}-\mu_{v}^{2} \right) |\eta_{z_{0}}| \end{equation}

In case $\left( 2|y_{0}|\mu_{v}-\mu_{v}^{2} \right) \geq 1$, the previous inequality \eqref{61} will hold.
\newline
\\
To verify this we need to check if

\begin{equation} |y_{0}| \geq \frac{1}{2} \left( \mu_{v}+\frac{1}{\mu_{v}} \right)=1.1205 \quad \text{for any } z_{0}=(x_{0},y_{0}) \in { \cal H}^{n+1}. \end{equation}
\\
At this point we give a geometrical argument.
\newline
\\
$\bullet$ The horizontal lines $\lbrace Y= \pm 1.1205 \rbrace$ cut the parabola $\lbrace X=A(n)-R-Y^{2} \rbrace$ at two points:
$$(x_{1},y_{1})=(8.5+ \epsilon \cos (n) - \sqrt{10.6}-1.2555,1.1205) \quad \text{and}$$
$$(x_{2},y_{2})=(8.5+ \epsilon \cos (n) - \sqrt{10.6}-1.2555,-1.1205)$$
\\
$\bullet$ The horizontal lines $\lbrace Y= \pm 1.1205 \rbrace$ cut the parabola $\lbrace Y=A(n+1)+R-X^{2} \rbrace$ at two points with a positive $x$-component:
$$(\bar{x}_{1},\bar{y}_{1})=(\sqrt{10.5+ \epsilon \cos (n+1) + \sqrt{10.6}-1.1205},1.1205) \quad \text{and}$$
$$(\bar{x}_{2},\bar{y}_{2})=(\sqrt{10.5+ \epsilon \cos (n+1) + \sqrt{10.6}+1.1205},-1.1205)$$
\\
From here we have that

$$\bar{x}_{1}<\bar{x}_{2}=\sqrt{10.5+ \epsilon \cos (n+1) + \sqrt{10.6}+1.1205} \leq \sqrt{10.6 + \sqrt{10.6}+1.1205}=$$

$$\sqrt{14.9758}=3.8699 < 3.8887=8.5-0.1-\sqrt{10.6}-1.2555 \leq $$

\begin{equation}\leq 8.5 + \epsilon \cos (n) - \sqrt{10.6}-1.2555=x_{2}=x_{1}<4.25<R \quad \quad \forall n \in \mathbb{Z} \end{equation}

The inequalities $\bar{x}_{1} < \bar{x}_{2} < x_{2} = x_{1}$ (note that $\bar{x}_{1} < \bar{x}_{2}$ is trivial due to the definitions) also hold for every parameter $A(n) = A^{*} + \epsilon \cos (n)$ satisfying $A^{*} \geq 9.5$ and $\epsilon = 0.1$. The reason comes from comparing the derivatives of $\bar{x}_{2}$ and $x_{2}$ with respect to $A^{*}$:

\begin{equation} \bar{x}_{2} = \sqrt{A^{*} + \epsilon \cos (n+1) +1 + \sqrt{1 + A^{*} + \epsilon \cos (n)} + 1.1205} \geq \sqrt{10.4 + \sqrt{10.4} + 1.1205} \approx 3.84 \end{equation}

\begin{equation} x_{2} = A^{*} + \epsilon \cos (n) -1 -\sqrt{1+A^{*}+ \epsilon \cos (n)} -1.1205 \end{equation}

$$\frac{d \bar{x}_{2}}{dA^{*}} = \frac{1}{2 \bar{x}_{2}} \cdot \left( 1 + \frac{1}{2\sqrt{1 + A^{*} + \epsilon \cos (n)}} \right) \leq \frac{1}{2 \bar{x}_{2}} \cdot \left( 1 + \frac{1}{2\sqrt{1 + A^{*} - \epsilon }} \right) \leq$$

\begin{equation} \leq \frac{1}{2 \bar{x}_{2}} \cdot \left( 1 + \frac{1}{2\sqrt{10.4}} \right) \leq \frac{1}{2 \cdot 3.84} \cdot \left( 1 + \frac{1}{2\sqrt{10.4}} \right) \approx 0.1504 \end{equation}

\begin{equation} \frac{d x_{2}}{dA^{*}} = 1- \frac{1}{2 \sqrt{1 + A^{*} + \epsilon \cos (n)}} \geq 1 - \frac{1}{2 \sqrt{1 + A^{*} - \epsilon}} \geq 1 - \frac{1}{2 \sqrt{10.4}} \approx 0.8450 \end{equation}

Clearly $\frac{d \bar{x}_{2}}{dA^{*}} < \frac{d x_{2}}{dA^{*}}$ for every $A^{*} \geq 9.5$. It follows that $\bar{x}_{2} < x_{2}$ for $A^{*} \geq 9.5$.

This setup shows that every point $z_{0}=(x_{0},y_{0}) \in { \cal H}^{n+1}$ satisfies that $|y_{0}| > 1.1205=\frac{1}{2} \left( \mu_{v}+\frac{1}{\mu_{v}} \right)$, since the four areas composing ${ \cal H}^{n+1}$ are either beneath the line $\lbrace Y=-1.1205 \rbrace$ or above the line $\lbrace Y=1.1205 \rbrace$, as can be observed in Fig. \ref{fig7}.
\\
\begin{figure}[h!]
\centering
\includegraphics[scale=0.7]{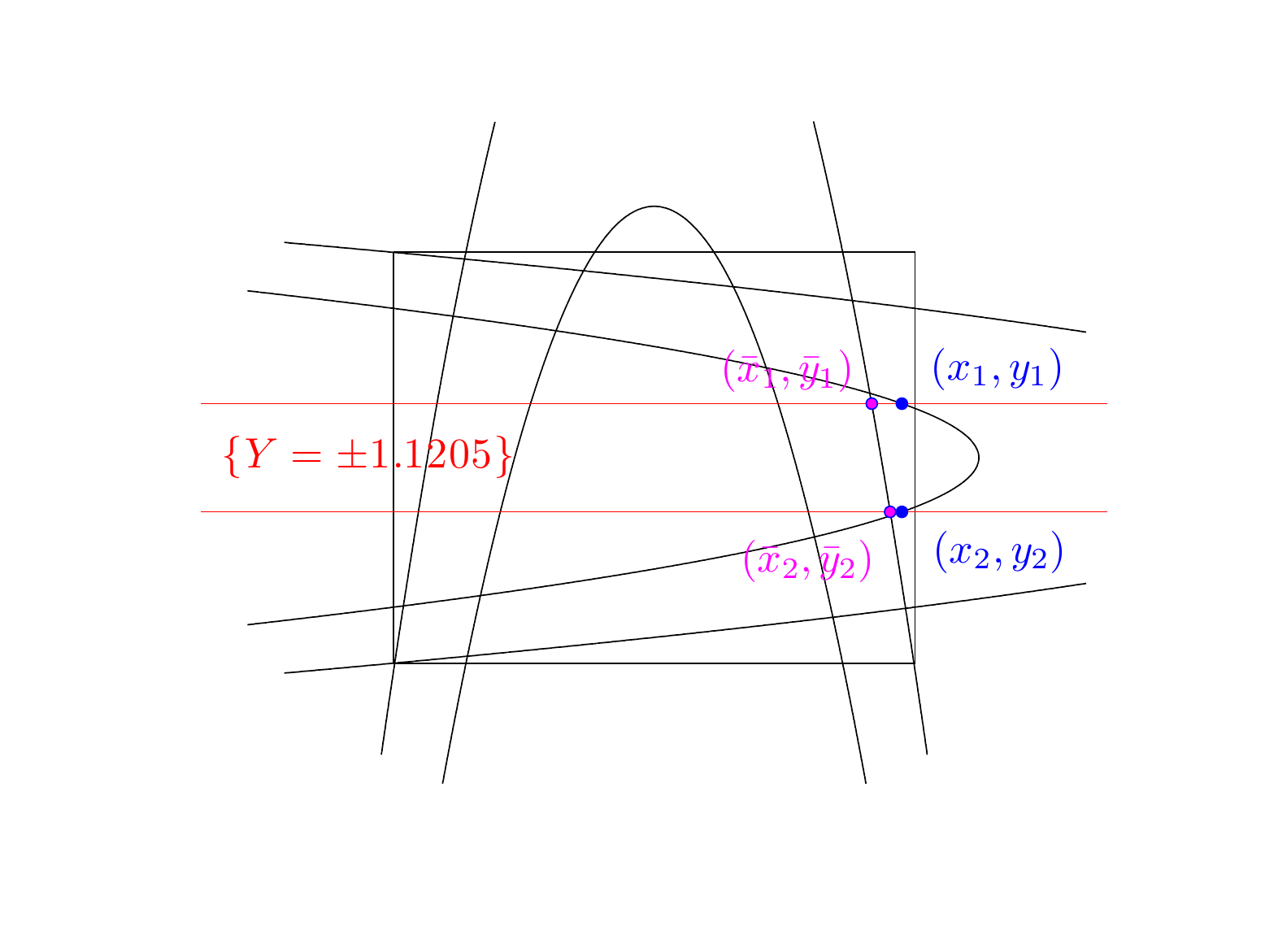}
\caption{The four areas composing ${ \cal H}^{n+1}$ are those bounded by the four parabolic strips (two horizontal and two vertical) contained in the square domain $D_{n+1}$.}
\label{fig7}
\end{figure}

\noindent
Since $z_{0} \in { \cal H}^{n+1}$ is an arbitrary point, the inclusion $Df^{-1}_{n}(S^{s}_{{ \cal H}^{n+1}})\subset S^{s}_{{ \cal V}^{n}}$ is  proven.
\newline
\\
For the second inclusion $Df_{n}(S^{u}_{{ \cal V}^{n}})\subset S^{u}_{{ \cal H}^{n+1}}$ we focus on the fact that ${ \cal V}^{n}=f_{n}^{-1}({ \cal H}^{n+1})$ and since $f_{n}^{-1}(x,y)=(y,A(n)-x-y^{2})$ transforms the $y$-components of the points of ${ \cal H}^{n+1}$ into the $x$-components of the points of ${ \cal V}^{n}$, it immediately follows that
\\
$$ |x_{0}|> \frac{1}{2} \left( \mu_{v}+\frac{1}{\mu_{v}} \right)=\frac{1}{2} \left( \mu_{h}+\frac{1}{\mu_{h}} \right) $$
\vspace{0.1cm}
\begin{equation}\text{for every point } z_{0}=(x_{0},y_{0}) \in { \cal V}^{n}. \end{equation}
\\
As in the previous case this inequality allows us to prove the inclusion,

$$Df_{n}(z_{0}) \cdot (\xi_{z_{0}},\eta_{z_{0}})= \left( \begin{array}{cc} -2x_{0} & -1 \\ 1 & 0 \end{array} \right) \cdot \left( \begin{array}{c} \xi_{z_{0}} \\ \eta_{z_{0}} \end{array} \right) = \left( \begin{array}{c} -2x_{0}\xi_{z_{0}}-\eta_{z_{0}} \\ \xi_{z_{0}} \end{array} \right) \in S^{u}_{{ \cal H}^{n+1}}$$
\begin{equation} \text{if and only if} \quad \quad |\xi_{z_{0}}| \leq \mu_{h} \cdot |2x_{0}\xi_{z_{0}}+\eta_{z_{0}}| \end{equation}
$$(\text{remember that } (\xi_{z_{0}},\eta_{z_{0}}) \in S^{u}_{{ \cal V}^{n}}, \quad |\eta_{z_{0}}| \leq \mu_{h} |\xi_{z_{0}}| )$$
\\
$$\text{We see that} \quad \quad \mu_{h} \cdot |2x_{0}\xi_{z_{0}}+\eta_{z_{0}}| \geq \mu_{h} \cdot \left[ 2|x_{0}||\xi_{z_{0}}|-|\eta_{z_{0}}| \right] \geq$$
$$\geq \mu_{h} \cdot \left[ 2|x_{0}||\xi_{z_{0}}|-\mu_{h}|\xi_{z_{0}}| \right] = \left[ 2|x_{0}|\mu_{h}-\mu_{h}^{2} \right] |\xi_{z_{0}}| \geq$$
\begin{equation} \geq \left[ 2\mu_{h} \cdot \frac{1}{2} \left( \mu_{h}+\frac{1}{\mu_{h}} \right) -\mu_{h}^{2} \right] |\xi_{z_{0}}| = \left[ \mu_{h}^{2}+1 -\mu_{h}^{2} \right] |\xi_{z_{0}}|=|\xi_{z_{0}}| \end{equation}
\\
and then the inclusion $Df_{n}(S^{u}_{{ \cal V}^{n}})\subset S^{u}_{{ \cal H}^{n+1}}$ is proved.
\newline
\\
Finally for the last part of Assumption 3 we will only prove the inequality
\begin{equation} |\eta_{f_{n}^{-1}(z_{0})}| \geq \frac{1}{\mu} |\eta_{z_{0}}| \quad \text{for } 0<\mu < 1-\mu_{h}\mu_{v} \text{ and } z_{0} \in { \cal H}^{n+1}, \text{ } (\xi_{z_{0}},\eta_{z_{0}}) \in S^{s}_{{ \cal H}^{n+1}} \end{equation}
since the inequality
\begin{equation} |\xi_{f_{n}(z_{0})}| \geq \frac{1}{\mu} |\xi_{z_{0}}| , \quad z_{0} \in { \cal V}^{n}, \text{ } (\xi_{z_{0}},\eta_{z_{0}}) \in S^{u}_{{ \cal V}^{n}} \end{equation}
is proved by using the same argument.
\\
$$|\eta_{f_{n}^{-1}(z_{0})}| = |2y_{0}\eta_{z_{0}}+\xi_{z_{0}}| \geq 2|y_{0}||\eta_{z_{0}}|-|\xi_{z_{0}}| \geq 2|y_{0}||\eta_{z_{0}}|-\mu_{v}|\eta_{z_{0}}|=$$
$$\left[ 2|y_{0}|-\mu_{v} \right] |\eta_{z_{0}}| \geq \frac{1}{\mu} |\eta_{z_{0}}| \quad \quad \text{if and only if}$$
\begin{equation} 2|y_{0}|-\mu_{v} \geq \frac{1}{\mu} \quad \longleftrightarrow \quad |y_{0}| \geq \frac{1}{2} \left( \mu_{v}+\frac{1}{\mu} \right) \end{equation}
\\
This last inequality is true if we require  that $\mu_{v}< \mu <1-\mu_{h}\mu_{v}$. This interval exists since $\mu_{v}=0.615$ is less than $(1-\mu_{h}\mu_{v})=0.621775$. Then we have that
\begin{equation} |y_{0}|>\frac{1}{2} \left( \mu_{v}+\frac{1}{\mu_{v}} \right) > \frac{1}{2} \left( \mu_{v}+\frac{1}{\mu} \right) \quad \text{for every } z_{0}=(x_{0},y_{0}) \in { \cal H}^{n+1}\end{equation}
\\
Analogously for any $z_{0}=(x_{0},y_{0}) \in { \cal V}^{n}$,
\begin{equation} |x_{0}|>\frac{1}{2} \left( \mu_{h}+\frac{1}{\mu_{h}} \right) > \frac{1}{2} \left( \mu_{h}+\frac{1}{\mu} \right) \end{equation}
\\
and the proof that the nonautonomous H\'{e}non map satisfies A1 and A3 with $0< \mu <1- \mu_{h} \mu_{v}$ is complete. Consequently it also satisfies A2 by using Theorem \ref{thm4}.
\newline
\\
By applying the main theorem it follows that there exists a chaotic  invariant  set $\lbrace \Lambda_{n} \rbrace_{n=-\infty}^{+\infty}$ (with respect to the nonautonomous H\'{e}non map $\lbrace f_{n} \rbrace$) contained in $\lbrace D_{n} \rbrace_{n=-\infty}^{+\infty}$ (let say $\Lambda_{n} \subset D_{n}=D$ and $f_{n}(\Lambda_{n})=\Lambda_{n+1}$) which is conjugate to a shift map of two symbols.
\end{proof}

\begin{remark} Comparing this result to what happens for the autonomous H\'{e}non map, it is curious that for some $n \in \mathbb{Z}$ the quantity $A(n)=9.5+\epsilon \cos (n)$ is less than $A_{2}=5+2\sqrt{5} \approx 9.47$, which is the minimum threshold for parameter $A$ for which the autonomous H\'{e}non map satisfies the autonomous Assumption 3.
\newline
\\
In other words, this given example shows that although for some $n \in \mathbb{Z}$ the values that parameter $A$ takes imply that $f_{n}$ does not satisfy the autonomous Assumption 3 separately, this fact does not necessarily mean that the nonautonomous Assumption 3 is not satisfied for the sequence $\lbrace f_{n} \rbrace_{n=-\infty}^{+\infty}$.
\end{remark}

\section{Summary and Conclusions}
\label{sec:summ}

In this paper we have considered a nonautonomous version of the H\'enon map and have provided necessary conditions for the map to possess a nonautonomous chaotic invariant set. This is accomplished by using a nonautonomous  version of the Conley-Moser conditions given in \cite{Wiggins99}.  We sharpen these conditions by providing a more analytical condition that, as a consequence, enables us to show that the chaotic invariant set is hyperbolic. In the course of the proof we provide a precise characterization of what is mean by the phrase ``hyperbolic chaotic invariant set'' for nonautomous dynamical systems. Currently there is much interest in nonautomous dynamics and a thorough analysis of a specific example might provide a benchmark for further studies, just as the work in \cite{Dev79} provided a benchmark for studies of chaotic dynamics for autonomous maps. Indeed, our  generalization of the H\'enon map to the nonautonomous setting provides an approach to generalizing the map to even more general nonautonomous settings, such as a consideration of ``noise''. This would be an interesting topic for future studies.

\vspace{0.3cm}

\section*{\bf Acknowledgments} The research of FB-I, CL and AMM is supported by the MINECO under grant MTM2014-56392-R. The research
of SW is supported by  ONR Grant No.~N00014-01-1-0769.  We acknowledge support from MINECO: ICMAT Severo Ochoa project
SEV-2011-0087.
\vspace{0.3cm}
\\

\end{document}